\theoremstyle{plain}
\newtheorem{theorem}{Theorem}[section]
\newtheorem*{theorem*}{Theorem}
\newtheorem{proposition}[theorem]{Proposition}
\newtheorem{corollary}[theorem]{Corollary}
\newtheorem{lemma}[theorem]{Lemma}
\theoremstyle{definition}
\newtheorem{remark}[theorem]{Remark}
\newtheorem{question}[theorem]{Question}
\newtheorem{definition}[theorem]{Definition}
\newtheorem*{conjecture}{Conjecture}
\theoremstyle{definition}
\newcommand{\Z}{\mathbb Z}
\newcommand{\nil}{\varnothing}
\newcommand{\wihat}{\widehat}
\newcommand{\defn}[1]{\emph{#1}}
\newcommand{\boundary}{\partial}
\newcommand{\mc}[1]{\mathcal{#1}}
\newcommand{\nbhd}{\operatorname{Nb}} 
\newcommand{\vol}{\operatorname{vol}} 
\renewcommand{\b}{\frak{b}}
\newcommand{\co}{\mskip0.5mu\colon\thinspace}
\begin{document}


   \title[]{Hyperbolic Brunnian Theta Curves}
   \author{Luis Celso Chan Palomo, Scott A. Taylor}

  \begin{abstract}
A nontrivial $\theta$-curve in $S^3$ is Brunnian if each of its cycles is the unknot. We show that if the exterior of a Brunnian $\theta$-curve is atoroidal, then it does not contain an essential annulus. Previously, Ozawa-Tsutsumi showed that there is no essential disc. Consequently, by Thurston's work, the exterior of an atoroidal Brunnian $\theta$-curve is hyperbolic with totally geodesic boundary. It follows that Brunnian $\theta$-curves of low bridge number have exteriors that are hyperbolic with totally geodesic boundary. We also show that two Brunnian $\theta$-curves are isotopic if and only if they are neighborhood isotopic and classify Brunnian spines of genus 2 handlebody knots. We rely heavily on a classification of annuli in the exteriors of genus two handlebody knots by Koda-Ozawa and further developed by Wang in conjunction with sutured manifold theory results of Taylor.
    \end{abstract}

   \date{\today}
\thanks{}

   \maketitle

\section{Introduction}
Theta curves are spatial graphs that seem to lie between knots and two-component links in their complexity. They show up in a number of places in knot theory, such as in the theory of tunnel number one knots \cite{ChoMcCullough}, strongly invertible knots \cite{Sakuma}, knotoids \cite{Turaev}, and in studying knotted DNA and knotted proteins \cites{DNATheta,ProteinTheta}. Brunnian theta curves form an interesting and important subclass. For example, the Kinoshita graph (Figure \ref{Kinoshita}), the best known Brunnian $\theta$-curve, is often used as a test for determining whether a spatial graph invariant detects the knottedness of a spatial graph \emph{qua} spatial graph or is only detecting the knottedness of its constitutent knots \cites{Kinoshita, McAteeSilverWilliams,GilleRobert}. Also adding to their interest, Wolcott showed that the class of Brunnian $\theta$-curves is closed under the operation of vertex sum $\#_3$ (see Proposition \ref{Brunn vertex sum} below). Recently \cite{T3}, the second author found a correspondence between Brunnian $\theta$-curves and nontrivial 1-tangles admitting two distinct unknot closures. Some constructions of Brunnian $\theta$-curves, including examples with an essential torus in their exterior, were given in \cite{Involve}. See \cite{Taylor-review} for a survey of topics related to Brunnian theta-curves and other abstractly planar spatial graphs. It also includes constructions of Brunnian $\theta$-curves. 

\subsection{Hyperbolicity of Brunnian $\theta$-curves}
The main purpose of this paper is to show:

\begin{theorem}\label{Main theorem}
    The exterior of an atoroidal Brunnian $\theta$-curve is anannular.
\end{theorem}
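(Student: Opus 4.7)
The plan is to proceed by contradiction. Suppose $\theta$ is an atoroidal Brunnian $\theta$-curve whose exterior $M$ contains an essential annulus $A$. Since $\partial M$ is a genus two surface, $M$ is the exterior of the genus two handlebody knot $V = N(\theta)$, so I would invoke the Koda--Ozawa classification of essential annuli in such exteriors, as further developed by Wang, together with the sutured manifold results of Taylor referenced in the abstract.

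Before starting the case analysis, I would note that $M$ is irreducible and $\partial$-irreducible: irreducibility follows because a nontrivial Brunnian $\theta$-curve is not split, while $\partial$-irreducibility is the Ozawa--Tsutsumi theorem. So $A$ is a genuine essential annulus in an irreducible, $\partial$-irreducible, atoroidal $3$-manifold with connected genus two boundary, and the classification applies.

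The Koda--Ozawa--Wang classification partitions essential annuli in such an exterior into a short list of types distinguished by the behavior of $\partial A$ relative to a complete meridional disc system of $V$. I expect these types to fall into three groups, each ruled out by a different argument. First, for annuli that witness a nontrivial vertex sum $\theta = \theta_1 \#_3 \theta_2$, Proposition \ref{Brunn vertex sum} forces each summand to be Brunnian; inducting on the sum of the edge bridge numbers, we reduce to the case that both $\theta_i$ fail to decompose further, and then analyze how the vertex-summing $3$-punctured sphere in $M$ meets $A$ to derive a contradiction. Second, for annuli that produce a Seifert fibered JSJ piece of $M$, atoroidality forces this piece to be either a cable space or all of $M$, and in either case one of the three cycles of $\theta$ is forced to be a nontrivial cable, torus knot, or satellite, contradicting the Brunnian hypothesis. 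Third, for the remaining exotic annuli, I would apply Taylor's sutured manifold decomposition results to $(M,\gamma)$, where $\gamma$ consists of meridians of the three edges of $\theta$; decomposing along $A$ should yield product discs whose existence translates to a nontrivial connect-sum decomposition of one of the three cycles, again contradicting unknottedness.

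The main obstacle I anticipate is the third group: bridging the gap between the local information carried by an essential annulus and the global cycle-level information demanded by the Brunnian hypothesis. Since Brunnianness is a statement about three specific cycles while $A$ is a piece of codimension-one data with no \emph{a priori} relation to those cycles, the subtlety lies in forcing $A$ to interact with at least one cycle in a way that atoroidality then converts into a contradiction. I expect atoroidality to be indispensable for ruling out the most flexible generic configurations, where $A$ is only loosely related to the cycles of $\theta$, and I expect Taylor's sutured manifold machinery to supply the final tool that promotes a local product structure along $A$ into a global factorization of one of the cycles.
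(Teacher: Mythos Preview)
Your plan has the right ingredients (Koda--Ozawa classification, Ozawa--Tsutsumi $\partial$-irreducibility, Taylor's sutured manifold results), but the proposed trichotomy does not match the actual classification and misses the organizing idea of the paper. There is no ``vertex-sum annulus'' type: an essential annulus capping to a twice-punctured sphere gives a connect sum with a knot, not a $\#_3$ decomposition, so your first group and the induction on bridge number never arise. More seriously, your second group is where the real work hides. Saying ``a Seifert piece forces some cycle to be a cable or torus knot'' is exactly the step that fails without further argument: in the Type~3-2 case the annulus has both boundary components as $(p,q)$-curves on $\partial N(\sigma)$ for one cycle $\sigma$, and since $\sigma$ is the unknot its exterior is already a solid torus, so no cycle is directly forced to be knotted. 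The paper devotes an entire subsection (the non-TUDA case) to ruling this out via a delicate analysis of how an unknotting disc for a second cycle intersects $N(\sigma)$ and the annulus, invoking Scharlemann's band-sum theorem; this is not visible from JSJ or Seifert-fibered considerations alone.

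The paper's actual structure is: the Koda--Ozawa classification guarantees a \emph{guardrail}, namely a meridian disc of the handlebody $V=N(\theta)$ disjoint from $\partial Q$. Taylor's sutured-manifold inequality (applied to the standard sutures on $X(\theta)$, i.e.\ the three edge-meridians) is then used not to decompose along $Q$, but to count intersections $|\partial Q\cap b_i|$ and conclude that the guardrail can be taken to be a meridian of the graph $\theta$ itself, not merely of $V$. Only then does a type-by-type analysis go through: Types~1, 2, 3-3 are short, and Type~3-2 splits into the TUDA case (which produces an essential torus, contradicting atoroidality) and the hard non-TUDA case above. Your outline lacks the guardrail concept entirely and misattributes the role of the sutured-manifold step, so as written it would stall at precisely the Type~3-2 configuration.
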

  
\begin{figure}[ht!]
\centering
\includegraphics[scale=0.75]{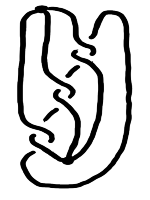}
\caption{The Kinoshita graph is the best known example of a Brunnian $\theta$-curve.}
\label{Kinoshita}
\end{figure}

This theorem is of particular interest due to the relationship of essential discs, annuli, and tori to hyperbolicity. There are two notions of hyperbolicity for spatial graphs: hyperbolic-with-parabolic meridians and hyperbolic with geodesic boundary. Let $X(\Gamma)$ be the exterior of a trivalent spatial graph $\Gamma$. Following \cite{HHMP}, we say that $\Gamma$ is \defn{hyperbolic with parabolic meridians} (henceforth, \defn{pm-hyperbolic}) if $X(\Gamma)\setminus \gamma$ admits a complete hyperbolic metric with totally geodesic boundary, where $\gamma$ is the union of the meridians of edges of $\Gamma$ with any knot components of $\Gamma$. The graph $\Gamma$ is \defn{hyperbolic} (henceforth, \defn{tg-hyperbolic}) if $X(\Gamma)$ admits a complete hyperbolic with totally geodesic boundary. 

As the authors of \cite{HHMP} remark, it is much more difficult for a spatial graph to be tg-hyperbolic than to be pm-hyperbolic. Being tg-hyperbolic is most naturally thought of as being a property of the handlebody-knot (or link) that is a regular neighborhood of the spatial graph; choosing a different spine for that handlebody does not change its exterior. On the other hand, being pm-hyperbolic is a property of the spatial graph \emph{qua} spatial graph, since the meridians of the edges are recorded. Both notions, unlike the situation with knot connected sum, are invariant under vertex summing (Proposition \ref{hyperbolicity is additive}).

Work of Thurston (see Corollary 2.5 and Theorem 2.6 of \cite{HHMP}) implies that a nontrivial spatial graph $\Gamma$ in $S^3$ is pm-hyperbolic if and only if its exterior does not admit an essential disc or annulus disjoint from the meridians of edges of $\Gamma$, as well as not admitting an essential sphere or torus; it is tg-hyperbolic if and only if its exterior does not admit an essential sphere, disc, annulus, or torus. He also showed (see \cite{Thurston-book}) that the Kinoshita graph is tg-hyperbolic.  There are examples of other Brunnian $\theta$-curves having essential tori in their exterior \cite{Involve}; those are neither pm-hyperbolic nor tg-hyperbolic. As a result of Thurston's work, Theorem \ref{Main theorem} immediately implies the following:

\begin{corollary}\label{hyperbolic thm}
    If $\Gamma$ is a Brunnian $\theta$-curve whose exterior is atoroidal then the exterior of $\Gamma$ is both pm-hyperbolic and tg-hyperbolic.
\end{corollary}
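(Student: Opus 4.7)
The plan is to apply Thurston's characterizations (recalled in the paragraph immediately preceding the corollary) by verifying that the exterior $X(\Gamma)$ admits none of the four kinds of essential surfaces: sphere, disc, annulus, or torus. Three of these obstructions come essentially for free: the atoroidality hypothesis excludes essential tori, the Ozawa--Tsutsumi theorem referenced in the abstract excludes essential discs, and Theorem \ref{Main theorem} (the main result of this paper) excludes essential annuli.

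The only item requiring its own sentence is the absence of an essential $2$-sphere. I would handle this by noting that $\Gamma$ is a connected spatial graph in $S^3$: any $2$-sphere $S \subset X(\Gamma)$ bounds a $3$-ball $B \subset S^3$, and by connectedness of $\Gamma$ the graph lies entirely on one side of $S$, so $S$ bounds a $3$-ball in $X(\Gamma)$ and is therefore inessential. With all four obstructions in hand, Thurston's criterion yields tg-hyperbolicity of $X(\Gamma)$.

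For pm-hyperbolicity I would then observe that the pm-criterion demands only the absence of essential discs and annuli that are \emph{disjoint from the meridians of edges of $\Gamma$}, together with the absence of essential spheres and tori; this is a strictly weaker requirement than the tg-criterion, so tg-hyperbolicity implies pm-hyperbolicity automatically and the corollary follows. This is fundamentally a packaging corollary---the substantive work lives entirely in Theorem \ref{Main theorem} and the cited Ozawa--Tsutsumi result---so I do not anticipate any genuine obstacle beyond making explicit the trivial sphere and irreducibility step above.
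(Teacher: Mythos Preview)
Your proposal is correct and matches the paper's approach: the paper simply states that the corollary follows immediately from Theorem~\ref{Main theorem} together with Thurston's characterization, without writing out a separate proof. Your explicit breakdown---hypothesis for tori, Ozawa--Tsutsumi (Lemma~\ref{bd irred}) for discs, Theorem~\ref{Main theorem} for annuli, irreducibility of $S^3$ for spheres, and then noting the pm-criterion is weaker than the tg-criterion---is exactly the intended unpacking.
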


In combination with recent work of Taylor-Tomova, this has the following pleasing corollary, an analogue of the fact that most 2-bridge knots are hyperbolic:
\begin{corollary}\label{7/2 bridge}
    Suppose that $T \subset S^3$ is a Brunnian $\theta$-curve of bridge number at most 7/2. Then $T$ is both pm-hyperbolic and tg-hyperbolic.
\end{corollary}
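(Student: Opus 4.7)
The plan is to use Corollary \ref{hyperbolic thm} to reduce to an atoroidality statement, and then invoke the alluded-to work of Taylor-Tomova to rule out essential tori at bridge number $\leq 7/2$. By Corollary \ref{hyperbolic thm}, as soon as $X(T)$ is shown to be atoroidal, $T$ is automatically both pm-hyperbolic and tg-hyperbolic. So the entire task reduces to excluding essential tori from $X(T)$ under the hypothesis that $T$ is Brunnian and has bridge number at most $7/2$.

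Next I would suppose for contradiction that $F \subset X(T)$ is an essential torus, and appeal to the Taylor-Tomova bridge-number results for spatial graphs. Such results extract from a low-bridge essential torus a ``companion-type'' structure on $T$: typically the conclusion is that $T$ must contain a nontrivial local knot along some edge, have a cycle with a nontrivial knot summand, or be exhibited as a satellite of a lower-complexity spatial graph with a nontrivial companion knot. The precise numerical bound $7/2$ is presumably the sharp bridge-number threshold at which each of these structural alternatives is still forced by their theorem for $\theta$-curves.

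Finally, I would derive a contradiction from the Brunnian hypothesis. Each of the three cycles of $T$ is unknotted. Consequently: no edge of $T$ can carry a nontrivial local knot summand, since such a summand would knot both cycles passing through that edge; no cycle of $T$ is itself a nontrivial knot; and no cycle can serve as a nontrivial companion knot for a satellite decomposition. Hence every one of the structural conclusions permitted by the Taylor-Tomova theorem is incompatible with $T$ being Brunnian. This forces $F$ not to exist, so $X(T)$ is atoroidal, and Corollary \ref{hyperbolic thm} completes the proof.

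The main obstacle is matching the precise statement of the Taylor-Tomova theorem to the Brunnian setting at the sharp endpoint: one must verify that the list of structural alternatives their theorem produces from an essential torus, when the bridge number is exactly $7/2$, really does consist entirely of configurations ruled out by the Brunnian hypothesis. I expect this to be a careful bookkeeping exercise rather than a substantive topological argument, but the sharpness of the $7/2$ bound means some vigilance is required at the borderline case (in contrast with, say, bridge number $\leq 3$, where the reduction would be entirely routine).
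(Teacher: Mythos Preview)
Your overall architecture is right: reduce to atoroidality via Corollary~\ref{hyperbolic thm}, then rule out an essential torus using the Taylor--Tomova work. But you have misidentified what that work actually provides, and this is not merely a bookkeeping issue.

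The Taylor--Tomova result used here (Corollary~6.13 of \cite{TT-Satellite}) is not a structure theorem producing a list of alternatives like ``local knot summand'' or ``nontrivial companion''. It is a Schubert-type numerical inequality: if $Q$ is an essential torus in $X(\Gamma)$, bounding a solid torus $V\supset\Gamma$ with core knot $K$ and wrapping number $\omega$, then $\b(\Gamma)\geq \omega\,\b(K)$. The paper's proof runs the numbers directly. Since $Q$ is essential, $K$ is a nontrivial knot, so $\b(K)\geq 2$. The Brunnian hypothesis is used in exactly one place: to exclude $\omega=1$. If $\omega=1$, compressing $Q$ along a disc meeting $\Gamma$ once yields a sphere exhibiting $\Gamma$ as a connected sum with a nontrivial knot, forcing some constituent knot of $\Gamma$ to be nontrivial. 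Hence $\omega\geq 2$, and one gets $7/2\geq \b(\Gamma)\geq \omega\,\b(K)\geq 2\cdot 2=4$, a contradiction.

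So the $7/2$ is not a ``sharp threshold at which structural alternatives are still forced''; it is simply any number less than $4$. Your speculated trichotomy (local knot on an edge, knotted cycle, satellite with nontrivial companion) and the accompanying case analysis are not what the argument requires, and attempting to carry them out would not recover the bound. The missing idea is the wrapping-number argument combined with the satellite bridge-number inequality.
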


\begin{remark}
    The \defn{bridge number} $\b(\Gamma)$ of a spatial graph $\Gamma$ in $S^3$ is the minimum of $|H \cap \Gamma|/2$, where $H \subset S^3$ is a 2-sphere transverse to $\Gamma$ dividing $S^3$ into two 3-balls $B_1, B_2$, such that for each $i \in \{1,2\}$, $\Gamma \cap B_i$ is a $\boundary$-parallel (in $B_i$) acyclic graph. See \cite{TT-g2} for more detail. The constructions of Brunnian $\theta$-curves in \cite{Involve} have bridge number at most 3. The Kinoshita graph has bridge number 5/2.
\end{remark}

\begin{proof}
By Corollary \ref{hyperbolic thm} it suffices to show that a $\theta$-curve $\Gamma$ admitting an essential torus in its exterior has bridge number at least 4. In spirit, this follows from Schubert's theorem concerning the bridge number of satellite knots \cites{Schubert, Schultens}. However, there are some subtleties for spatial graphs which were worked out by Taylor and Tomova \cite{TT-Satellite}.

Suppose that $\Gamma \subset S^3$ is a $\theta$-curve admitting an essential torus $Q$ in its exterior. Then $Q$ bounds a solid torus $V \subset S^3$ containing $\Gamma$. The \defn{wrapping number} $\omega$ of $\Gamma$ in $Q$ is the minimum of $|\Gamma \cap D|$ over all essential discs $D \subset V$ transverse to $\Gamma$. If $\omega = 1$, then we may compress $Q$ along a disc intersecting $\Gamma$ in a single point to obtain a sphere $P$ intersecting $\Gamma$ transversally in two points. This implies that $\Gamma$ is the connected sum of a (possibly trivial) $\theta$-curve with a nontrivial knot $K'$. But this, in turn, would imply that some constituent knot of $\Gamma$ is nontrivial, a contradiction. Thus, $\omega \geq 2$.

Let $K$ be the knot that is a core loop of $V$. Corollary 6.13 of \cite{TT-Satellite} says that the bridge number of $\Gamma$ is at least $\omega \b(K)$ where $\b(K)$ is the bridge number of $K$. The bridge number of a nontrivial knot is at least $2$. Thus,
\[
7/2 \geq \b(\Gamma) \geq \omega \b(K) \geq 2\b(K) \geq 4.
\]
This is a contradiction.
\end{proof}

\begin{remark}
    It is likely that the techniques of \cite{Schultens} or \cite{TT-Satellite} can be used to improve Corollary \ref{7/2 bridge} to say that a Brunnian $\theta$-curve of bridge number at most 4 is atoroidal and, therefore, pm-hyperbolic and tg-hyperbolic: If $\Gamma$ has bridge number 4, then both vertices lie on the same side of a minimal bridge sphere $H$. Since, by the previous computation, $K$ must be be 2-bridge, it is likely that $Q$ can be isotoped to intersect $H$ in four simple closed curves bounding disjoint discs in $H$ and which cut $Q$ into annuli. This is, however, not possible since on one side of $H$, $\Gamma$ has at most three components.
\end{remark}

\subsection{Spines of Brunnian genus 2 handlebody knots}

If $\Gamma \subset S^3$ is a connected spatial graph, then a closed regular neighborhood $V=\nbhd(\Gamma)$ is a \defn{handlebody knot} with \defn{spine} $\Gamma$. Equivalently, $V$ is a handlebody embedded in $S^3$ that deformation retracts to $\Gamma$. Two connected spatial graphs $\Gamma_1$ and $\Gamma_2$ are \defn{equivalent} if there is an ambient isotopy taking one to the other; they are \defn{neighborhood equivalent} if there is an ambient isotopy taking a regular neighborhood of one to a regular neighborhood of the other; that is, if the corresponding handlebody knots are equivalent. Any handlebody knot of genus at least 2 admits infinitely many inequivalent spines. In particular, although equivalence of two spatial graphs implies neighborhood equivalence, neighborhood equivalence does not imply equivalence. For instance, if $K$ is a tunnel number 1 knot with tunnel $\tau$, then $K \cup \tau$ is neighborhood equivalent to the trivial $\theta$-curve.

However, we prove:

\begin{theorem}\label{unique spine}
Two Brunnian $\theta$-curves are equivalent if and only if they are neighborhood equivalent. In particular, a genus 2 handlebody knot admits at most one Brunnian spine, up to equivalence.
\end{theorem}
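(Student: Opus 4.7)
The forward direction is immediate: any ambient isotopy of $S^3$ carrying $\Gamma_1$ to $\Gamma_2$ also carries a regular neighborhood of $\Gamma_1$ to one of $\Gamma_2$. For the converse, suppose $\Gamma_1$ and $\Gamma_2$ are Brunnian $\theta$-curves whose regular neighborhoods are equivalent. Composing with the realizing ambient isotopy, I may arrange $V := \nbhd(\Gamma_1) = \nbhd(\Gamma_2)$, so that both curves are spines of a single genus 2 handlebody knot $V \subset S^3$ with common exterior $E := S^3 \setminus \inter V$. The goal is then to produce an ambient isotopy of $S^3$, preserving $V$ setwise, that carries $\Gamma_1$ to $\Gamma_2$.

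Each spine $\Gamma_i$ is determined up to isotopy inside $V$ by its system of edge cocores: a triple $\mc D_i = \{D^i_1, D^i_2, D^i_3\}$ of disjoint meridian discs cutting $V$ into two $3$-balls. The Brunnian condition on $\Gamma_i$ is equivalent to the assertion that, for each $j$, the solid torus obtained by cutting $V$ along $D^i_j$ is unknotted in $S^3$; equivalently, each curve $\partial D^i_j \subset \partial V$ is \emph{primitive} for the pair $(V,E)$, in the sense that $E$ with a $2$-handle attached along $\partial D^i_j$ is a solid torus. The theorem thus reduces to showing that any two such Brunnian disc systems in $V$ are related by a self-homeomorphism of $V$ that is realizable by an ambient isotopy of $S^3$.

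My plan is to pair the discs in $\mc D_1$ with those in $\mc D_2$ via essential annuli in $E$: the primitivity on the $E$-side supplies, for each pair of primitive meridian discs, a candidate annulus in $E$ connecting their boundaries, and the Koda--Ozawa--Wang classification of essential annuli in exteriors of genus $2$ handlebody knots, coupled with the sutured manifold theory results of Taylor mentioned in the abstract, gives tight control on which such annuli actually arise and on how they meet $\mc D_1 \cup \mc D_2$. Using these annuli as templates for ambient ``disc-swap'' isotopies that preserve $V$, I would isotope $\mc D_1$ to $\mc D_2$ as unordered disc systems in $V$; a final isotopy inside $V$, extended by the identity on $E$, then completes the equivalence $\Gamma_1 \simeq \Gamma_2$. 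The principal obstacle is precisely this annulus-matching step: a priori, two different Brunnian spines could yield wholly unrelated pants decompositions of $\partial V$ by primitive meridians, and converting the Koda--Ozawa--Wang list of annuli into a transitive action of ambient disc-swaps on Brunnian disc systems---with the sutured manifold bounds keeping the list of annuli tractable---is where the substantive work lies.
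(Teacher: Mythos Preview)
Your proposal is a plan rather than a proof, and the plan has a genuine flaw at exactly the step you flag as the obstacle. You propose to match the two disc systems by finding essential annuli in $E$ joining $\partial D^1_j$ to $\partial D^2_k$, but primitivity of a meridian curve (i.e.\ that attaching a $2$-handle along it yields a solid torus) does not produce any such annulus; it only tells you $E[\partial D^i_j]$ is a solid torus, not that two primitive curves cobound an annulus in $E$. Worse, for atoroidal Brunnian $\theta$-curves the paper's own Theorem~\ref{Main theorem} shows $E$ is \emph{anannular}, so in the generic case there are no essential annuli in $E$ whatsoever for you to work with. The Koda--Ozawa--Wang classification controls essential annuli that happen to exist; it cannot manufacture them.

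The paper's argument is quite different and more direct. It does not use the annulus classification at all for this theorem. Instead it invokes Taylor's $2$-handle addition theorem (Theorem~\ref{2h add}): if $a$ and $b$ are meridian curves of $V$ with both $N[a]$ and $N[b]$ failing to be irreducible and $\boundary$-irreducible, then either $a$ and $b$ can be isotoped disjoint, or there is an essential annulus in $N$ with one boundary component on $\partial M$. Since here $M=S^3$ has empty boundary, the annulus alternative is vacuous; and since each edge-meridian of a Brunnian $\theta$-curve yields $N[b]$ a solid torus (hence $\boundary$-reducible), the hypothesis applies to every pair $(a,b)$ with $a\in\mu_1$, $b\in\mu_2$. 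Thus every meridian of $\Gamma_1$ can be made disjoint from every meridian of $\Gamma_2$, hence all six curves can be taken disjoint on $\partial V$. But $\partial V\setminus\mu_2$ is two pairs of pants, so each curve of $\mu_1$ is forced to be parallel to one of $\mu_2$; the disc systems then coincide and the spines are isotopic in $V$. The sutured manifold input is used to make meridians \emph{disjoint}, not to build annuli between them.
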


 Consequently, it makes sense to speak of a \defn{Brunnian genus 2 handlebody knot}: a genus two handlebody knot admitting a Brunnian $\theta$-curve as a spine. When it does not take us too far afield, we also consider other genus 2 spatial graphs: handcuff graphs and 2-bouquets.  

Theorem \ref{unique spine} is proved using some of the same tools as in our proof of Theorem \ref{Main theorem}.

\subsection{Questions and Conjectures}
We conclude this introduction with some questions and conjectures.

\subsubsection{Essential Annuli?}

Our proof of Theorem \ref{Main theorem} raises the following question:
\begin{question}
    Does there exist a (necessarily toroidal) Brunnian $\theta$-curve admitting an essential annulus in its exterior?
\end{question}

\subsection{Unique Complements?}
Based on Theorem \ref{unique spine}, we conjecture\footnote{On the basis of preliminary versions of the work in this paper, this conjecture was stated in \cite{Taylor-review}}:

\begin{conjecture}
  Brunnian $\theta$-curves are determined by their complements. That is, if $\Gamma_1$ and $\Gamma_2$ are Brunnian $\theta$-curves in $S^3$ with homeomorphic complements, there exists a homeomorphism of $S^3$ to itself taking $\Gamma_1$ to $\Gamma_2$.
\end{conjecture}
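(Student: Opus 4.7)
The plan is to reduce the conjecture to a handlebody-knot uniqueness question via Theorem \ref{unique spine}, and then to attack that question using the hyperbolic structure supplied by Corollary \ref{hyperbolic thm} together with Mostow rigidity. Suppose $\phi\co X(\Gamma_1) \to X(\Gamma_2)$ is a homeomorphism; by Theorem \ref{unique spine}, $\Gamma_1$ and $\Gamma_2$ are equivalent if and only if the handlebody knots $V_i := \nbhd(\Gamma_i)$ are equivalent, so it suffices to upgrade $\phi$ to a homeomorphism of $S^3$ sending $V_1$ onto $V_2$. The obstacle is that $\phi$ need not a priori send a meridian system of $V_1$ on $\partial X(\Gamma_1)$ to a meridian system of $V_2$ on $\partial X(\Gamma_2)$.

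In the atoroidal case, which is the heart of the matter, Corollary \ref{hyperbolic thm} endows each $X(\Gamma_i)$ with a hyperbolic structure having totally geodesic boundary. Mostow--Prasad rigidity (in its version for compact hyperbolic $3$-manifolds with geodesic boundary) therefore makes $\phi$ isotopic to an isometry $\phi_0$, whose restriction to the boundary is an isometry of hyperbolic genus $2$ surfaces. The remaining task is to show that, after possibly composing $\phi_0$ with a suitable element of the (finite) isometry group of $X(\Gamma_2)$, it sends a meridian system of $V_1$ to a meridian system of $V_2$.

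The key step is therefore an intrinsic characterization of meridian systems on $\partial X(\Gamma_i)$. I would exploit the three canonical meridians $m^{(1)}, m^{(2)}, m^{(3)}$ of the edges of $\Gamma_i$: because each pair of edges forms an unknotted cycle, each pair $m^{(j)} \cup m^{(k)}$ cobounds in $S^3 \setminus \inter{V_i}$ an annulus with an unknot. Although these annuli are inessential in $X(\Gamma_i)$, they should, in combination with the Koda--Ozawa classification of annuli in genus $2$ handlebody-knot exteriors and its development by Wang together with the sutured-manifold methods of Taylor used in the proofs of Theorems \ref{Main theorem} and \ref{unique spine}, pin down $\{m^{(1)},m^{(2)},m^{(3)}\}$ as a distinguished unordered triple of isotopy classes on $\partial X(\Gamma_i)$---for instance, as those slope triples along which capping off by meridian discs extends $X(\Gamma_i)$ to $S^3$. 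The toroidal case is then handled inductively along the JSJ decomposition: each innermost essential torus bounds a solid torus containing $\Gamma_i$, the constituent-unknot property sharply constrains the companion structure, and the pieces are assembled via an argument parallel to that of Theorem \ref{unique spine}.

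The main obstacle is precisely this intrinsic characterization of meridian systems. Unlike the Gordon--Luecke setting for knots, where the meridian is singled out as the unique slope whose Dehn filling yields $S^3$, a handlebody knot's meridian system is a pair of curves, and generic genus $2$ handlebody knots are not determined by their complements. It is the Brunnian hypothesis, through the strong surface restrictions given by Theorem \ref{Main theorem} and the Ozawa--Tsutsumi disc theorem, that should be the decisive input ruling out ambiguity.
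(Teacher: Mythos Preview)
The statement you are trying to prove is a \emph{conjecture} in the paper, not a theorem; the paper offers no proof, only heuristic evidence (Gordon--Luecke for knots, Mangum--Stanford for Brunnian links, and the observation that annulus-twisting tends to destroy the Brunnian property). So there is no paper proof to compare against, and the question is whether your proposal actually closes the problem.

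It does not. You correctly identify the obstacle---showing that a homeomorphism $X(\Gamma_1)\to X(\Gamma_2)$ can be adjusted to carry a meridian system to a meridian system---but you do not overcome it. Mostow rigidity in the atoroidal case buys you nothing here: it upgrades $\phi$ to an isometry, but an isometry of the boundary genus-$2$ surface has no reason to respect the extrinsic data of which curves bound discs in the filling handlebody. Your proposed ``intrinsic characterization'' of the meridian triple as ``those slope triples along which capping off extends $X(\Gamma_i)$ to $S^3$'' is circular: the assertion that this triple is unique \emph{is} the conjecture. Genus~$2$ handlebody knots are in general not determined by their complements (there are explicit counterexamples due to Motto and Lee--Lee), so some genuinely new input from the Brunnian hypothesis is required, and neither the Koda--Ozawa annulus classification nor Theorem~\ref{SMT} as stated provides it. Your reduction via Theorem~\ref{unique spine} is valid only after the meridian problem is solved, so it does not shorten the distance to the goal. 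In short, you have restated the difficulty the paper already flags, not resolved it.
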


Here are some additional reasons to believe the conjecture might be true:
\begin{itemize}
    \item Nontrivial knots are Brunnian links of one component. Gordon and Luecke \cite{GL} showed that knots are determined by their complements.
    \item Mangum and Stanford \cite{MangumStanford} showed that most Brunnian \emph{links} are determined by their complements. The cases not covered by their theorem are, to our knowledge, still open.
    \item Apart from edge sliding to produce different spines of the same handlebody, the other known construction for producing distinct spatial graphs with homeomorphic exteriors consists of twisting an essential planar surface in a spatial graph exterior. The intersection of a disc spanning a constituent unknot with the exterior of a Brunnian $\theta$-curves is an essential planar surfaces. However, twisting along such a disc has the effect of twisting the other constituent knots. In almost all cases, twisting an unknot along a disc produces a nontrivial knot \cite[Theorem 4.2]{KMS}.
\end{itemize}

\subsection{Volumes?}
For a trivalent hyperbolic graph $\Gamma$ having at least one vertex properly embedded in a closed 3-manifold $M$, we can consider its  volume $\vol_{tg}(M,\Gamma)$ of $X(\Gamma)$ using a complete hyperbolic metric with totally geodesic boundary and cusps corresponding to knot components of $\Gamma$ \cite{HHMP}. Similarly, if $\Gamma$ is hyperbolic with parabolic meridians, we may consider the volume $\vol_{pm}(M,\Gamma)$ of the noncompact hyperbolic manifold with totally geodesic boundary consisting of pairs-of-pants and annular cusps corresponding to the meridians of $T$ and toroidal cusps corresponding to the knot components of $T$. In Proposition \ref{hyperbolicity is additive} below, we prove that the vertex sum of two Brunnian $\theta$-curves is hyperbolic (in either sense) if and only if each of the factors are. Thus, we ask:
\begin{question}
Suppose that $(M_1, \Gamma_1)$ and $(M_2, \Gamma_2)$ are hyperbolic or hyperbolic with parabolic meridians $\theta$-curves. Let $(M,\Gamma) = (M_1,\Gamma_1) \#_3 (M_2, \Gamma_2)$ be any trivalent vertex sum. How do the invariants $\vol_{tg}(M,\Gamma)$ and $\vol_{tg}(M_1, \Gamma_1) + \vol_{tg}(M_2, \Gamma_2)$ relate to each other? How do the invariants $\vol_{pm}(M,\Gamma)$ and $\vol_{pm}(M_1, \Gamma_1) + \vol_{pm}(M_2, \Gamma_2)$ relate to each other?
\end{question}

In his original article on knotoids \cite{Turaev}, Turaev establishes a relationship between spherical knotoids and theta-curves with a distinguished constituent unknot. He shows that there is a semigroup isomorphism from spherical knotoids to such theta curves. Essentially, this works as follows. A \defn{spherical knotoid} is (informally) an immersed arc in the sphere with over/under information at the crossings. Considering the sphere as a Heegaard sphere for $S^3$, a $\theta$-curve may be created by coning the endpoints of the arc to the centers of each of the balls on either side of the sphere. Conversely, if $T$ is a $\theta$-curve containing a distinguished constituent unknot $U$, we may arrange that $U$ is in 1-bridge position with respect to a Heegaard sphere and may then generically project the third edge onto the Heegaard sphere to obtain a knotoid. 

Recently the authors of \cite{Adams1} and \cite{Adams2} defined two notions of hyperbolicity for spherical knotoids, each giving rise to a definition of hyperbolic volume. These  hyperbolic volumes are additive under knotoid product. Neither of these definitions coincides with the corresponding $\theta$-curve being pm-hyperbolic or tg-hyperbolic; these therefore give two other possibilities for defining the hyperbolicity of spherical knotoids. It would be interesting to pin down the relationship between these different versions.

\section{Definitions and Conventions}\label{sec def}

We denote the exterior of a set $U \subset S^3$ by $X(U)$ and a regular neighborhood (open or closed, depending on context) by $\nbhd(U)$. 

A \defn{spatial graph} (respectively \defn{handlebody knot}) is a graph (resp. handlebody) properly embedded in a 3-manifold; in this paper that 3-manifold will be $S^3$. A graph is \defn{trivalent} if every vertex is trivalent. A spatial graph $\Gamma$ is \defn{reducible} if there exists a properly embedded sphere in $M$ intersecting $\Gamma$ transversally in a single point.  A \defn{$\theta$-curve} is a connected spatial graph with two vertices, three edges, and no loops.  A \defn{handcuff curve} is a connected spatial graph with three edges, two of which are loops based at distinct vertices; it has a single constituent two-component link formed by the loops. A \defn{2-bouquet} is a connected spatial graph with a single vertex and two loops; it has two constituent knots meeting in a single point. A spatial graph is a \defn{genus 2 curve} if it is a $\theta$-curve, handcuff curve, or 2-bouquet. For genus 2 curves in $S^3$, only a handcuff curve can be reducible. A \defn{constituent knot} (resp. link) in a spatial graph is a cycle (resp. union of disjoint cycles) of the spatial graph. If $\Gamma$ is a $\theta$-curve with edges $e_1, e_2, e_3$, we let $K_{ij} = e_i \cup e_j$.

A spatial graph in $S^3$ is \defn{trivial} or \defn{unknotted} if it can be isotoped to lie on a tame 2-sphere. A spatial graph in $S^3$ has the \defn{Brunnian property} if each proper subgraph is trivial. In particular, a $\theta$-curve or 2-bouquet has the Brunnian property if and only if every constituent knot is unknotted. A handcuff curve has the Brunnian property if and only if its constituent two-component link is the unlink.  A spatial graph is \defn{Brunnian} if it is nontrivial and has the Brunnian property. In the literature, a Brunnian $\theta$-graph is also known as \emph{almost unknotted} or \emph{minimally knotted} or a \defn{ravel}.  For other spatial graphs, these terms mave have slightly different meanings, depending on the author. Notice that if $\Gamma$ is a reducible handcuff curve with the Brunnian property then it must be the trivial handcuff curve. 

Given spatial graphs $\Gamma_1$ and $\Gamma_2$ (possibly in different 3-manifolds), having trivalent vertices $v_1$ and $v_2$, we can form a trivalent vertex sum $\Gamma_1 \#_3 \Gamma_2$ as follows. Remove a regular neighborhood of each of $v_1$ and $v_2$ from the ambient 3-manifolds and glue the resulting spherical boundary components $S_1, S_2$ together by a homeomorphism $h \co S_1 \to S_2$ taking $\Gamma_1 \cap S_1$ to $\Gamma_2 \cap S_2$. The result is a spatial graph $\Gamma_1 \#_3 \Gamma_2$ in the connected sum of the 3-manifolds.  This sum is particularly well-behaved on $\theta$-curves in $S^3$. If $\Gamma_1$ and $\Gamma_2$ are \emph{oriented} $\theta$-curves and if $h$ preserves the orientations, then the resulting $\theta$-curve is unique up to equivalence. See \cite{Wolcott} for details. Matveev and Turaev \cite{MT} prove that $\theta$-curves have unique (in a certain sense) prime decompositions with respect to trivalent vertex sum. The trivial $\theta$-curve is an identity element for $\#_3$. There is a \defn{summing sphere} $S$, intersecting $\Gamma_1 \#_3 \Gamma_2$ three times and dividing $\Gamma_1 \#_3 \Gamma_2$ back into $\Gamma_1$ and $\Gamma_2$. 

A \defn{meridian} of a handlebody $V$ is an essential simple closed curve in $\boundary V$ bounding a disc in $V$. If $\Gamma$ is a spatial graph, a \defn{meridian} of $\Gamma$ is a meridian of $V = \nbhd(\Gamma)$ that bounds a disc in $V$ intersecting $\Gamma$ in exactly one point, and that point is interior to an edge of $\Gamma$. Note that, up to proper isotopy, the meridians of a spatial graph are in bijection with the edges, but that a handlebody knot of genus at least two has infinitely many meridians.

Let $N$ be a compact, orientable 3-manifold with nonempty boundary. If $b \subset N$ is a simple closed curve, we denote the 3-manifold obtained by attaching a 2-handle to the curve $b$, by $N[b]$. We apply this in the case when $N$ is the exterior of a genus two handlebody $V \subset S^3$. In this case, if $b$ is a separating curve, $N[b]$ is the exterior of a two-component link in $S^3$ and if $b$ is nonseparating, then $N[b]$ is the exterior of a knot in $S^3$. Given a properly embedded surface $Q \subset N$ with $\boundary Q$ intersecting $b$ minimally up to isotopy, a \defn{$b$-boundary compressing disc} for $Q$ is a disc embedded in $N$, with interior disjoint from $\boundary N \cup Q$ and with boundary the endpoint union of an arc on $Q$ and a subarc of $b$. If $\boundary Q$ is disjoint from $b$, then there is no $b$-boundary compressing disc for $Q$.

\section{Outline}
We prove Theorem \ref{Main theorem} by combining classical 3-manifold theory techniques, the sutured manifold theory results of Taylor \cites{T1,T2}, and the classification of essential annuli in genus two handlebody exteriors by Koda and Ozawa \cite{KO} and further developed by Wang \cite{Wang} and Koda-Ozawa-Wang \cite{KOW}. 

In Section \ref{basic props} we prove some basic facts about Brunnian $\theta$-curves, some of which may be of independent interest.

In Section \ref{sec class} we summarize the Koda-Ozawa annulus classification. The Koda-Ozawa classification guarantees that (in our context) there is a certain meridional disc for the handlebody knot $V = \nbhd(\Gamma)$. (This disc may or may not be a meridian of $\Gamma$.) We find it helpful to give this disc a name.

\begin{definition}
Suppose that $V \subset S^3$ is a genus 2 handlebody knot. Suppose that $Q \subset X(V)$ is an essential annulus. A \defn{guardrail} for $Q$ is an essential disc $A \subset V$ for which $a = \boundary A$ is disjoint from $\boundary Q$. We also call the curve $a$ a \defn{guardrail} for $Q$.
\end{definition}

For our purposes, the key result from the classification is Corollary \ref{Guardrails exist}, guaranteeing the existence of guardrails.

In Section \ref{sec specific} we consider the possibility that a guardrail is actually a meridian of $\Gamma$, not just of $\nbhd(\Gamma)$. The most involved case concerns the possibility that each boundary component of an essential annulus $Q$ is a $(p,q)$ curve on the regular neighborhood of one of the cycles of $\Gamma$, while being disjoint from a neighborhood of the 3rd edge which contains the guardrail. The only time we cannot rule out the existence of such an annulus $Q$ is when the components of $\boundary Q$ are preferred longitudes of the cycle. But in that case, we can join them by an annulus lying in the boundary of the neighborhood of the cycle to produce an essential torus in the $\theta$-curve exterior.

In Section \ref{sec: SMT} we finish the proof that atoroidal Brunnian $\theta$-curve exteriors do not admit an essential annulus, by applying a result of Taylor's from sutured manifold theory to the case when the guardrail is not a meridian of $\Gamma$.  Figure \ref{fig:annularbrunnianhandcuff} shows two examples of Brunnian handcuff curves admitting essential annuli. Observe that in each example the annulus admits the meridian of the separating edge as a guardrail; Proposition \ref{limited prop} shows this will always be the case.

In Section \ref{Spine Uniqueness}, we apply that same result to understand when a genus 2 handlebody knot admits more than one Brunnian spine and prove Theorem \ref{unique spine}.

\begin{figure}\label{fig:annularbrunnianhandcuff}
\centering
\includegraphics[scale=0.5]{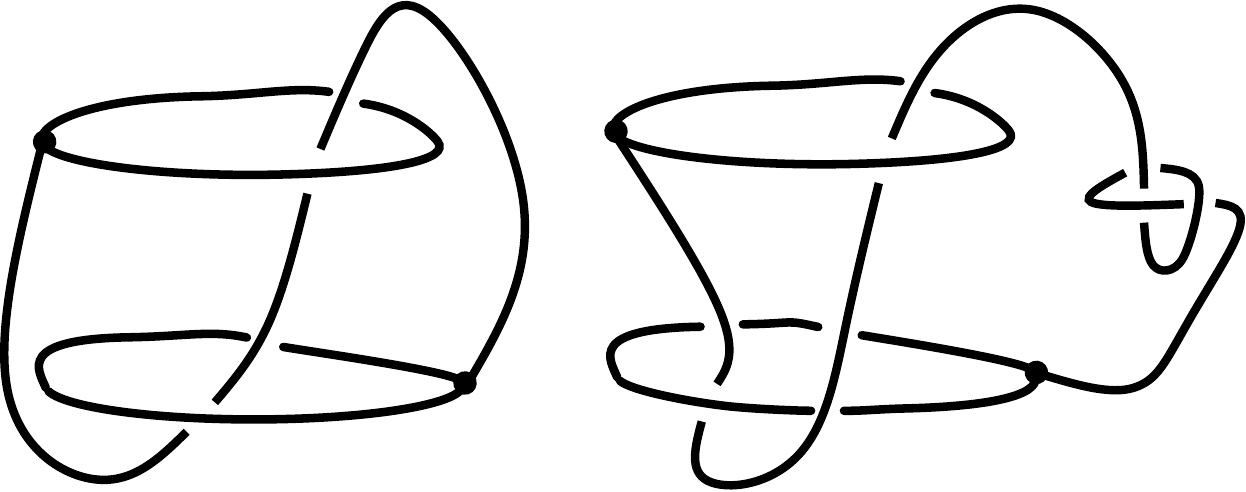}
\caption{On the left is a Brunnian handcuff curve with an essential annulus joining the two loops and disjoint from the separating edge. On the right is a Brunnian handcuff curve with an essential annulus having both boundary components meridians of the separating edge.}
\end{figure}

\section{Basic Properties}\label{basic props}

We repeatedly use the following useful result (which is a special case of a more general result):

\begin{lemma}[{Ozawa-Tsutsumi \cite{OT}}]\label{bd irred}
    If $\Gamma$ is a Brunnian $\theta$-curve or handcuff curve, then $X(\Gamma)$ is $\boundary$-irreducible 
\end{lemma}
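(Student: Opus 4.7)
The plan is to argue by contradiction. Suppose $D\subset X(\Gamma)$ is an essential compressing disc, and let $c=\boundary D$, an essential simple closed curve on the genus-two surface $\Sigma=\boundary\nbhd(\Gamma)$. Let $m_1,m_2,m_3$ be the three meridians of the edges of $\Gamma$ on $\Sigma$; they are disjoint and cut $\Sigma$ into two pairs of pants corresponding to the two vertices. The plan has two stages: first reduce to the case that $c$ is isotopic to some $m_i$, and then derive a contradiction by capping to obtain a sphere meeting $\Gamma$ in a single point.

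For the reduction, put $c$ in minimal position with respect to $m_1\cup m_2\cup m_3$. If $c$ is disjoint from all $m_i$, then it lies in one pair of pants, hence, being essential, is boundary parallel and isotopic to some $m_i$. Otherwise, I would invoke the Brunnian hypothesis: each constituent cycle of $\Gamma$ is trivial and thus bounds a disc in $S^3$, giving essential planar surfaces in $X(\Gamma)$ after intersecting with the exterior. Considering $D$ together with these Brunnian planar surfaces and applying standard innermost-disc and outermost-arc arguments (using irreducibility of $S^3$), one should iteratively reduce $|c\cap m_i|$ until $c$ is disjoint from the $m_i$, reducing to the previous case.

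Once $c=m_i$, the sphere $S=D\cup D_i$ meets $\Gamma$ transversely in a single point $p\in\inter{e_i}$, where $D_i\subset\nbhd(\Gamma)$ is the meridional disc of $e_i$. Since $S$ separates $S^3$ into two balls and $\Gamma$ crosses $S$ at $p$, the set $\Gamma\setminus\{p\}$ must be disconnected. For a $\theta$-curve this is an immediate contradiction: removing an interior point of any edge leaves the remaining two edges connecting the two vertices, so $\Gamma\setminus\{p\}$ is connected. The same argument handles the case that $e_i$ is a loop edge of a handcuff curve. In the remaining case of a handcuff with $e_i$ the separating middle edge, $S$ decomposes $\Gamma$ into two lollipops $L_1,L_2$, one in each ball. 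The Brunnian hypothesis, specifically that each $e_j\cup e_3$ ($j=1,2$) is a trivial lollipop in $S^3$, forces each $L_j$ to be trivially embedded in its ball; reassembling then exhibits $\Gamma$ as the trivial handcuff curve, contradicting nontriviality of $\Gamma$.

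The main obstacle I anticipate is the reduction step: the Brunnian spanning surfaces may intersect $D$ and the $m_i$ in delicate ways, and care is required to choose simplification moves that strictly decrease a suitable well-ordered complexity and terminate with $c$ disjoint from the $m_i$. A secondary subtlety is justifying, in the middle-edge handcuff case, that $S$ can be adjusted so each $L_j$ becomes trivial in its ball; this relies on the free valence-one endpoint created by cutting $e_3$, which lets the half-arc be isotoped radially once the loop is placed on a standard disc.
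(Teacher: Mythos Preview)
The paper does not give its own proof of this lemma; it is quoted from Ozawa--Tsutsumi as a special case of their theorem that minimally knotted spatial graphs are totally knotted, so there is nothing in the paper to compare your argument against beyond the citation.

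Your proposal has a genuine gap at the reduction step, and it is not just missing bookkeeping. You want to isotope $c=\partial D$ off the edge meridians $m_i$ by intersecting $D$ with the planar surfaces $P$ obtained by restricting a spanning disc of a constituent unknot to $X(\Gamma)$. But $\partial P$ consists of a longitude of that cycle together with meridians of the \emph{remaining} edge where it punctures the spanning disc; this is not the curve system $m_1\cup m_2\cup m_3$ you are trying to push $c$ off of. An outermost arc of $D\cap P$ in $D$ cuts off a subdisc that $\partial$-compresses $P$ --- it does not produce an isotopy of $D$ decreasing any $|c\cap m_i|$, and no complexity you have named goes down under such a move. In effect the reduction \emph{is} the lemma: once a compressing disc has boundary an edge meridian, your capping step is a two-line connectivity check, so the entire content is hidden in the step you have only sketched. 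A smaller point: in the handcuff middle-edge case, your light-bulb justification for trivialising each $L_j$ in its ball $B_j$ does not work as stated, since the ball on the other side of $\partial B_j$ contains the other loop and is not empty, so you cannot freely drag the free endpoint of the half-arc over the sphere.
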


Similarly:

\begin{lemma}\label{basic fact}
Suppose that $\Gamma$ is a Brunnian $\theta$-curve with constituent unknot $\sigma$. Then there is no essential disc in $X(\sigma)$ intersecting the edge $\Gamma\setminus \sigma$ in a single point.
\end{lemma}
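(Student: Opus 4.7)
The plan is to argue by contradiction. Assume $D \subset X(\sigma)$ is an essential disc with $|D \cap e_3| = 1$, and let $p$ denote the intersection point. Since $\sigma$ is unknotted, $X(\sigma)$ is a solid torus and $D$ is a meridional disc, so $\boundary D$ is a longitude of $\sigma$ on $\boundary \nbhd(\sigma)$. The first step is to tube $D$ along $e_3$: remove the small disc $\delta = D \cap \nbhd(e_3)$ to obtain a properly embedded annulus $A := D \setminus \delta \subset X(\Gamma)$. Its two boundary circles lie on $\boundary X(\Gamma)$: one is $\boundary D$ (a longitude of $\sigma$) on the portion of $\boundary X(\Gamma)$ coming from $\boundary \nbhd(\sigma)$, and the other is a meridian $\mu$ of $e_3$ on the lateral annulus of $\boundary \nbhd(e_3)$.

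I would then verify that $A$ is essential in $X(\Gamma)$. Incompressibility rests on two observations: first, $\mu$ cannot bound a disc in $X(\Gamma)$, since capping such a disc off by the meridional disc of $\nbhd(e_3)$ would yield a 2-sphere meeting the connected spatial graph $\Gamma$ transversely in a single point, which is impossible; second, $\boundary D$ is essential on $\boundary X(\Gamma)$ and therefore cannot bound a disc in $X(\Gamma)$ by Lemma \ref{bd irred}. A homological computation using the meridians as a generating set of $H_1(X(\Gamma))$ shows $[\boundary D]$ and $[\mu]$ represent distinct primitive classes on $\boundary X(\Gamma)$, so $A$ is not boundary-parallel.

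The main obstacle, and where the Brunnian hypothesis must really be used, is converting this essential annulus $A$ into a compressing disc for $X(\Gamma)$ and thereby contradicting Lemma \ref{bd irred}. Note that we cannot invoke Theorem \ref{Main theorem} to rule out the essential annulus, because the present lemma is used in its proof. Instead I would try to construct a $\boundary$-compression of $A$ by exploiting the constituent unknots $K_{13} = e_1 \cup e_3$ and $K_{23} = e_2 \cup e_3$. Choosing a Seifert disc $F$ for $K_{13}$ in $S^3$ and putting $F$ in general position with $A$, standard innermost-disc and outermost-arc arguments on $F \cap A$ should produce a $\boundary$-compressing disc $\Delta$ for $A$ whose arc on $\boundary X(\Gamma)$ runs across a vertex-sphere of $\boundary\nbhd(\Gamma)$ from $\mu$ over to a parallel copy of $\boundary D$.

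Performing the $\boundary$-compression of $A$ across $\Delta$ yields a disc in $X(\Gamma)$ whose boundary is a band-sum of $\boundary D$ with $\mu$ along an arc on $\boundary X(\Gamma)$. Using the vertex relation $\mu_{e_1} + \mu_{e_2} + \mu_{e_3} = 0$ in $H_1(X(\Gamma); \Z)$, one checks that this band-summed curve represents a nonzero primitive class and hence is essential on the genus two surface $\boundary X(\Gamma)$. This essential disc then contradicts $\boundary$-irreducibility of $X(\Gamma)$ from Lemma \ref{bd irred}, completing the proof. The hard part will be producing the $\boundary$-compressing disc $\Delta$ with enough control that the resulting disc in $X(\Gamma)$ has essential boundary; this is where I expect the Brunnian hypothesis on $K_{13}$ and $K_{23}$ (not just on $\sigma$) must be used in a nontrivial way.
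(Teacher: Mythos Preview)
Your proposal has a genuine gap at the crucial step. You correctly tube $D$ along $e_3$ to obtain an annulus $A$, and you correctly observe that a $\boundary$-compression of $A$ would produce a disc in $X(\Gamma)$; since the two components of $\boundary A$ lie in different homology classes of $H_1(\boundary X(\Gamma))$, that disc would necessarily have essential boundary, contradicting Lemma~\ref{bd irred}. But you never actually produce the $\boundary$-compressing disc. Saying that ``standard innermost-disc and outermost-arc arguments on $F\cap A$ should produce'' it is not enough: your Seifert disc $F$ for $K_{13}$ will in general meet $e_2$, so $F\cap X(\Gamma)$ is a planar surface with several boundary components, and an outermost arc of $F\cap A$ in $F$ need not cut off a disc disjoint from those extra boundary circles. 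You yourself flag this as ``the hard part,'' and it is exactly where the argument stalls.

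The paper avoids this detour entirely. Rather than tubing $D$ to an annulus and then trying to $\boundary$-compress back, it works directly with $D$ and intersects it with a spanning disc $E$ for a constituent unknot $\sigma'$ that \emph{contains} $e_3$ (say $\sigma'=e_2\cup e_3$). Because $e_3\subset\boundary E$, the point $p=D\cap e_3$ lies on $\boundary E$, so there is a distinguished arc $\zeta$ of $D\cap E$ with one endpoint at $p$. After removing circles (innermost-disc) and arcs with both endpoints on $\boundary\nbhd(\sigma)$ (outermost-arc), $\zeta$ is the only remaining intersection; the subdisc of $E$ it cuts off is exactly a guide for isotoping $e_3$ across $D$ to remove the intersection point $p$. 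After that isotopy $D$ lies in $X(\Gamma)$ with essential boundary, contradicting Lemma~\ref{bd irred}. The key choice you are missing is to use a disc bounded by a cycle through $e_3$, so that the intersection with $D$ automatically has a distinguished arc anchored at $p$; this is what makes the innermost/outermost argument go through cleanly.
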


In the statement of Lemma \ref{basic fact}, we mean $X(\sigma)$ to be the complement of an open regular neighborhood $\nbhd(\sigma)$ intersecting $\Gamma \setminus \sigma$  in a regular neighborhood of its endpoints.

\begin{proof}
Suppose that $\Gamma$ is a $\theta$-curve having the Brunnian property. Let $\sigma$ be a cycle whose exterior admits an essential disc $D$ intersecting $\Gamma\setminus \sigma$ in one point $p$. Let $\sigma'$ be one of the other cycles. Without loss of generality, assume that $\sigma = e_1 \cup e_2$ and $\sigma' = e_2 \cup e_3$. Let $E$ be a disc with boundary $\sigma'$. Choose it so that $E \cap \nbhd(\sigma)$ is a regular neighborhood in $E$ of the arc $e_2 \cup (e_3 \cap \nbhd(\sigma))$.

The intersection $D \cap E$ consists of $e_2$ and a collection of arcs and \emph{a priori} circles. However, there is a unique arc in $D \cap E$ with one endpoint on $p$. Consequently, an innermost disc argument there are no circles. Any other arc of intersection has both endpoints on $\boundary \nbhd(\sigma)$. An outermost arc argument shows that, by minimality, no such arcs can exist. Consequently, $D \cap E$ consists of exactly one arc $\zeta$; it has one endpoint at $p$ and one on $\boundary \nbhd(\sigma)$. In $E$ it cuts off a subdisc that can be used to isotope $\Gamma$ so as to eliminate the point $p$. Consequently, $X(\Gamma)$ is $\boundary$-reducible, contradicting Lemma \ref{bd irred}.
\end{proof}

Finally, we present two results showing that Brunnian $\theta$-curves are an interesting category. The first is due to Wolcott, and follows easily from the fact that $\#_3$ acts as a connected sum on the cycles of a $\theta$-curve. The second relies on Corollary \ref{hyperbolic thm}.

\begin{proposition}[{Lemma 4.1 and Theorem 4.2 of \cite{Wolcott}}]\label{Brunn vertex sum}
 If $\Gamma_1$ and $\Gamma_2$ are Brunnian $\theta$-curves, then so is $\Gamma_1 \#_3 \Gamma_2$. Conversely, if $\Gamma = \Gamma_1 \#_3 \Gamma_2$ is a Brunnian $\theta$-curve with $\Gamma_1, \Gamma_2$ both nontrivial, then $\Gamma_1$ and $\Gamma_2$ are both Brunnian.
\end{proposition}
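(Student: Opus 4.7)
The plan is to unpack the trivalent vertex sum and observe that it acts as ordinary knot connected sum on each corresponding pair of cycles. The gluing homeomorphism $h \co S_1 \to S_2$ pairs up the three edge-endpoints at $v_1$ with those at $v_2$, so each of the three edges of $\Gamma_1 \#_3 \Gamma_2$ is the concatenation of an edge of $\Gamma_1$ with a uniquely determined partner edge of $\Gamma_2$. A cycle of $\Gamma_1 \#_3 \Gamma_2$ is the union of two such composite edges, and taking the summing sphere $S$ to be the sphere along which the gluing is performed, this cycle decomposes as the connected sum of the two corresponding cycles of $\Gamma_1$ and $\Gamma_2$.

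For the forward direction, assume each $\Gamma_i$ is Brunnian. Every cycle of $\Gamma_i$ is unknotted, so every cycle of $\Gamma_1 \#_3 \Gamma_2$ is a connected sum of two unknots, hence is the unknot. This gives the Brunnian property. To rule out triviality of $\Gamma_1 \#_3 \Gamma_2$ itself, I would invoke the Matveev--Turaev prime decomposition theorem \cite{MT}: the trivial $\theta$-curve is the identity for $\#_3$ and is prime, so two nontrivial summands cannot combine to produce it.

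For the converse, suppose $\Gamma = \Gamma_1 \#_3 \Gamma_2$ is Brunnian with both $\Gamma_i$ nontrivial. Each cycle of $\Gamma$ is unknotted, and by the observation above is the connected sum of the corresponding cycles of $\Gamma_1$ and $\Gamma_2$. The standard fact that the unknot is prime under connected sum of knots (no nontrivial knot admits a connected-sum decomposition with unknotted total) forces each corresponding cycle of each $\Gamma_i$ to be unknotted. Hence each $\Gamma_i$ has the Brunnian property and, being nontrivial by hypothesis, is Brunnian.

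The main obstacle is the nontriviality clause of the forward direction. The Brunnian property follows essentially automatically from the cycle description, but excluding the possibility that the vertex sum collapses to the trivial $\theta$-curve genuinely needs external input: either the prime decomposition theorem of \cite{MT}, or a direct argument showing, for instance, that the exterior of $\Gamma_1 \#_3 \Gamma_2$ cannot be a genus two handlebody when each $X(\Gamma_i)$ contains an incompressible surface inherited from the nontriviality of the factor.
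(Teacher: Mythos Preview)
Your proof is correct and matches the paper's approach: the paper does not give a detailed argument but cites Wolcott and remarks that the result ``follows easily from the fact that $\#_3$ acts as a connected sum on the cycles of a $\theta$-curve,'' which is precisely your core observation for both directions. You are right that the nontriviality of the sum in the forward direction is the one step requiring outside input; your appeal to \cite{MT} is valid (phrased more precisely: in a monoid with unique prime factorization the identity admits only the empty factorization, so a product of two nontrivial elements cannot be trivial---calling the identity ``prime'' is a slight misnomer), though this machinery postdates Wolcott's original 1987 argument.
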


\begin{proposition}\label{hyperbolicity is additive}
Suppose that $\Gamma_1$ and $\Gamma_2$ are spatial graphs in $S^3$ and that $\Gamma = \Gamma_1 \#_3 \Gamma_2$ is any trivalent vertex sum. If $\Gamma_1$ and $\Gamma_2$ are tg-hyperbolic or pm-hyperbolic, then so is $\Gamma$. On the other hand, if $\Gamma$ is a Brunnian tg-hyperbolic $\theta$-curve and $\Gamma_1$ and $\Gamma_2$ are nontrivial, then $\Gamma_1$ and $\Gamma_2$ are also tg-hyperbolic Brunnian $\theta$-curves.
\end{proposition}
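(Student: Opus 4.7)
The plan is to analyze both directions via the summing 2-sphere that comes with any trivalent vertex sum. Let $S \subset S^3$ be a summing sphere for $\Gamma = \Gamma_1 \#_3 \Gamma_2$, meeting $\Gamma$ transversally in three points. Its trace $P = S \cap X(\Gamma)$ is a properly embedded pair of pants whose three boundary components are meridians of the three edges of $\Gamma$ through $S$. Cutting $X(\Gamma)$ along $P$ produces two pieces naturally homeomorphic to $X(\Gamma_1)$ and $X(\Gamma_2)$: in each $\boundary X(\Gamma_i)$, the three meridians at the summing vertex $v_i$ bound a pair of pants, and identifying these reconstructs $X(\Gamma)$. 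I first observe that $P$ is incompressible in $X(\Gamma)$: every essential simple closed curve in a pair of pants is boundary-parallel, hence a meridian of an edge of $\Gamma$, and no such meridian can bound a disc in $X(\Gamma)$ (otherwise one would produce a 2-sphere meeting the $\theta$-curve $\Gamma$ transversally in a single point, which is impossible since the two vertices would have to lie on the same side while the two non-summand edges would force extra intersections).

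For the forward direction I argue by contradiction. Assuming both $\Gamma_i$ are tg-hyperbolic (or pm-hyperbolic) but $X(\Gamma)$ contains an essential sphere, disc, annulus, or torus $F$ (disjoint from $\gamma$ in the pm-hyperbolic case), I isotope $F$ to minimize $|F \cap P|$. Standard innermost-disc/outermost-arc arguments, using the incompressibility of both $F$ and $P$ together with $\boundary$-incompressibility where available, remove every circle of $F \cap P$ bounding a disc in either surface and every arc of $F \cap P$ parallel into $\boundary P$ in $P$. What remains is essential on both sides, and cutting $F$ along these essential curves/arcs yields subsurfaces in $X(\Gamma_i)$ that are themselves essential (an essential surface cut by an essential surface gives essential pieces). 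Examining the possible topological types of these pieces — sphere, disc, annulus, torus, or pair of pants with meridional boundary — each contradicts tg-hyperbolicity (resp. pm-hyperbolicity) of the relevant $\Gamma_i$; the pair-of-pants case reduces to the absence of essential annuli and discs by a further standard decomposition.

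For the converse, assume $\Gamma$ is a tg-hyperbolic Brunnian $\theta$-curve and both $\Gamma_i$ are nontrivial. Proposition \ref{Brunn vertex sum} forces each $\Gamma_i$ to be Brunnian, and the summing-sphere partition shows each has exactly two vertices and three edges, hence is a $\theta$-curve. By Corollary \ref{hyperbolic thm} it now suffices to show that each $\Gamma_i$ is atoroidal. Suppose toward a contradiction that $T \subset X(\Gamma_1)$ is an essential torus; I view $T$ inside $X(\Gamma) = X(\Gamma_1) \cup_P X(\Gamma_2)$. Any compressing disc $D$ for $T$ in $X(\Gamma)$ may be isotoped off $P$ via $P$'s incompressibility; since $\boundary D \subset T \subset X(\Gamma_1)$, $D$ then lies in $X(\Gamma_1)$, contradicting essentiality of $T$ there. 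Boundary-parallelism of $T$ in $X(\Gamma)$ is ruled out because $\boundary X(\Gamma)$ is a genus-2 surface with no torus component. Hence $T$ would be essential in $X(\Gamma)$, contradicting the tg-hyperbolicity of $\Gamma$.

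The main obstacle will be the case-by-case verification in the forward direction, especially when $F$ is an essential annulus: the essential arcs of $F \cap P$ can connect the three boundary components of $P$ in several patterns, and I must confirm that every resulting piece in $X(\Gamma_i)$ is an essential surface of a type forbidden by tg- or pm-hyperbolicity. The pm-hyperbolic version requires the same analysis with surfaces, arcs, and discs kept disjoint from the meridians $\gamma$, which mainly adds bookkeeping. The remaining ingredients — the pair-of-pants identification, the incompressibility of $P$, and the converse reduction via Corollary \ref{hyperbolic thm} — should be routine.
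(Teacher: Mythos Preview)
Your plan follows the paper's: intersect a putative essential surface with the summing sphere, minimize, use innermost/outermost arguments to locate an essential piece in some $X(\Gamma_i)$, and handle the converse identically via Corollary~\ref{hyperbolic thm}. Two small fixes: in the forward direction $\Gamma$ is an arbitrary spatial graph, so your incompressibility argument for $P$ should invoke the $\partial$-irreducibility of the hyperbolic $X(\Gamma_i)$ rather than assume $\Gamma$ is a $\theta$-curve; and cutting a sphere, disc, annulus, or torus along essential arcs and circles never produces a pair of pants, so that case is superfluous (the paper sidesteps this by working with the full sphere $S$ and compressing along subdiscs of $S$ meeting $\Gamma$ once, which forces the offending surface down to a disc or annulus before cutting).
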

\begin{proof}
We prove the contrapositive. Suppose either that $\Gamma$ is not tg-hyperbolic or that it is not pm-hyperbolic. By work of Thurston if a spatial graph is not tg-hyperbolic its exterior contains an essential sphere, disc, annulus or torus \cite[Theorem 2.6]{HHMP} and if it is not pm-hyperbolic its exterior contains an essential sphere or torus or an essential disc or annulus disjoint from meridians of its edges \cite[Theorem 2.2]{HHMP}. Let $S$ be the summing sphere corresponding to the vertex sum. 

We prove the statement for tg-hyperbolicity, as the proof for pm-hyperbolicity is nearly identical. Let $P \subset X(\Gamma)$ be an essential sphere, disc, annulus, or torus. Out of all such possibilities, choose $P$ to minimize $|S \cap P|$. If $S \cap P = \nil$, then $P$ lives in either $X( \Gamma_1)$ or $X(\Gamma_2)$. Whichever it is, cannot be tg-hyperbolic. Assume, therefore, that $S \cap P \neq \nil$.

Let $\gamma \subset S \cap P$ be a circle of intersection (if such exists), bounding an innermost disc $D \subset S$. We have $|D \cap \Gamma| \in \{0,1\}$, since $S$ is a thrice punctured sphere. Since $P$ is incompressible, if $D \cap \Gamma = \nil$, then $\gamma$ also bounds a disc $E$ in $P$ disjoint from $\Gamma$. After a small isotopy, the sphere $D \cup E$ intersects $S$ fewer times than does $P$. Thus, it is not essential and so bounds a 3-ball in $S^3$ disjoint from $\Gamma$. An isotopy of $E$ across $D$ converts $P$ into a sphere contradicting our choice of $P$. Thus, $|D \cap \Gamma| = 1$. In all cases, compressing $P$ using $D$ creates a surface contradicting our original choice of $P$. Hence, $S \cap P$ consists of arcs. In particular, $P$ is a disc or annulus.

Let $\zeta \subset S \cap P$. If $P$ is a disc, consider an outermost disc $D \subset P$ cut off by an arc of $S \cap P$. The disc $D$ is either inessential in one of $X(\Gamma_1)$ or $X(\Gamma_2)$, in which case we can isotope $P$ to reduce $|P \cap S|$ or it is essential. In which case, $\Gamma_1$ or $\Gamma_2$ is nonhyperbolic. Consider, therefore, the case when $P$ is an annulus. As before, if some arc of $S \cap P$ is inessential in $P$, we contradict our choice of $P$. Thus, all arcs are essential in $P$. A component of $P\setminus S$ is, therefore, an essential disc in $X(\Gamma_1)$ or $X(\Gamma_2)$. Thus, at least one of $\Gamma_1$ or $\Gamma_2$ is not tg-hyperbolic. 

On the other hand, suppose that $\Gamma$ is a Brunnian $\theta$-curve and that $\Gamma_1, \Gamma_2$ are nontrivial. By Proposition \ref{Brunn vertex sum}, they are Brunnian. If one (say $\Gamma_1$) is not tg-hyperbolic, then by Corollary \ref{hyperbolic thm}, $X(\Gamma_1)$ admits an essential torus. When creating $\Gamma$, as the vertex of $\Gamma_1$ where the sum is performed is disjoint from the torus, the essential torus persists to $X(\Gamma)$, showing that $X(\Gamma)$ is not tg-hyperbolic.
\end{proof}

\section{Koda-Ozawa Annuli Classification}\label{sec class}

In this section, we explain the Koda-Ozawa classification \cite{KO} of annuli in the exterior of a genus 2 handlebody knot. Along the way, we make use of certain rephrasings and improvements found in subsequent papers \cites{Wang, KOW}. We also adjust the notation to coincide with that used in the rest of this paper; however, not all the terminology will be used in the remainder.

\begin{definition}[{\cite[pp3-4]{KOW}}]
Suppose that $V \subset S^3$ is a genus two handlebody knot and that $Q \subset X(V)$ is an essential annulus. Then:
\begin{enumerate}
\item $Q$ is \defn{Type 1} if both components of $\boundary Q$ are meridians of $V$;
\item $Q$ is \defn{Type 2} if exactly one component of $\boundary Q$ is a meridian in $V$;
\item $Q$ is \defn{Type 3} if no component of $\boundary Q$ is a meridian of $V$, but there is a compressing disc $D$ for $\boundary V$ in $S^3$ disjoint from $\boundary Q$. Additionally, for some such $D$:
\begin{enumerate}
    \item $Q$ is \defn{Type 3-1} if $D \subset X(V)$
    \item $Q$ is \defn{Type 3-2} if $D \subset V$, $D$ does not separate $\boundary Q$, the components of $\boundary Q$ are parallel in $\boundary V$;
    \item $Q$ is \defn{Type 3-3} if $D \subset V$, $D$ does separate $\boundary Q$, the components of $\boundary Q$ are not parallel in $\boundary V$;
\end{enumerate}
\item $Q$ is of \defn{Eudave-Mu\~noz Type} if $V$ is the double-branched cover of a Eudave-Mu\~noz tangle and $Q$ is the standard associated annulus.
\item $Q$ is of \defn{Type 4} if the components of $\boundary Q$ are parallel in $\boundary V$ and there is no compressing disc for $\boundary V$ in $S^3$ disjoint from $Q$. Additionally,
\begin{enumerate}
    \item $Q$ is of \defn{Type 4-1} if $X(V)$ is atoroidal\footnote{Note the typo in \cite{KOW}.};
    \item $Q$ is of \defn{Type 4-2} if $X(V)$ is toroidal.
\end{enumerate}
\end{enumerate}
\end{definition}

\begin{remark}\label{EM type}
    By \cite{KO} and \cite[Lemma 4.1]{KOW}, every annulus of type 4-1 is of Eudave-Mu\~noz Type.
\end{remark}

\begin{lemma}\label{exists 123}
    If $X(V)$ is $\boundary$-irreducible and atoroidal and $X(V)$ admits an essential annulus, then $X(V)$ admits an essential annulus of Type 1, 2, 3-2, or 3-3. 
\end{lemma}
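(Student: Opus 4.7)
The plan is to take an arbitrary essential annulus $Q \subset X(V)$ and eliminate, one by one, the types from the Koda-Ozawa classification that are not already among Types 1, 2, 3-2, or 3-3. By the classification, $Q$ is of one of the Types 1, 2, 3-1, 3-2, 3-3, Eudave-Mu\~noz (which subsumes Type 4-1 by Remark \ref{EM type}), or 4-2. If $Q$ is already of Type 1, 2, 3-2, or 3-3 there is nothing to prove, so the task reduces to ruling out Types 3-1, 4-2, and the Eudave-Mu\~noz case.

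Types 3-1 and 4-2 are dispatched immediately from the hypotheses. By definition, a Type 3-1 annulus comes with a compressing disc $D \subset X(V)$ for $\boundary V$ whose boundary is disjoint from $\boundary Q$; the existence of such a $D$ witnesses that $X(V)$ is $\boundary$-reducible, contrary to assumption. A Type 4-2 annulus by definition exists only when $X(V)$ is toroidal, which is also contrary to hypothesis. In both cases the hypothesis is used directly against the defining property of the type.

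The main obstacle is the Eudave-Mu\~noz case, since by Remark \ref{EM type} the Type 4-1 subcase occurs precisely when $X(V)$ is atoroidal, and in general such an annulus need not force $\boundary$-reducibility of $X(V)$ either. My plan here is to use the explicit structural description of Eudave-Mu\~noz handlebody knots developed in \cite{KO} and refined in \cites{Wang, KOW}. Rather than trying to rule out the existence of $Q$ itself, I would use the picture of $V$ as the double branched cover of an EM tangle to extract an auxiliary essential annulus of one of the acceptable types: for instance, a curve on $\boundary V$ lifted from a meridian of a tangle strand that bounds a meridional disc of $V$ and cobounds an essential annulus in $X(V)$, producing a Type 1 or Type 2 annulus; or an annulus obtained by surgering $Q$ along additional compressing data in $V$ so as to produce a meridional disc of $V$ disjoint from the new annulus, producing a Type 3-2 or 3-3 annulus. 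I expect this step to be the most delicate, precisely because the EM case is the one in which the naive Koda-Ozawa trichotomy fails to deliver a guardrail automatically, so extracting an auxiliary annulus requires genuine use of the EM tangle geometry. As a backup, should no such auxiliary annulus be produced directly, the plan is to show that the lifted Conway sphere of the EM tangle provides an essential torus in $X(V)$, contradicting atoroidality and eliminating the Eudave-Mu\~noz case altogether.
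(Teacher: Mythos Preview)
Your handling of Types 3-1 and 4-2 is correct and matches the paper exactly. The substantive gap is in the Eudave-Mu\~noz/Type 4-1 case. Your strategic instinct there --- to extract an auxiliary annulus of an acceptable type from the EM structure rather than try to rule out $Q$ itself --- is right, and it is precisely what the paper does. But your execution is not yet a proof: ``lift a meridian of a tangle strand'' and ``surger $Q$ along additional compressing data in $V$'' are suggestions, not arguments, and neither comes with a verification that the resulting surface is an essential annulus of the claimed type. The paper closes this gap by citing specific structural results from \cite{KOW}: the dichotomy that $(S^3,V)$ is of Type $K$ or Type $M$ (their Theorem 2.2, Lemma 3.2, Definition 3.3), together with the classification of characteristic and non-characteristic annuli in each case (their Lemma 4.9, Theorem 5.2), which in both cases exhibits a Type 3-2 annulus explicitly.

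Your backup plan is circular. Type 4-1 is \emph{defined} by the condition that $X(V)$ is atoroidal, so any torus you produce --- lifted Conway sphere or otherwise --- must be inessential in $X(V)$. You cannot eliminate the EM case by finding an essential torus, because both the hypotheses of the lemma and the definition of Type 4-1 already stipulate there is none. The only way through is the auxiliary-annulus route, and for that you need the \cite{KOW} machinery (or an independent argument of comparable strength) rather than a sketch.
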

\begin{proof}
    By \cite{KO}, every essential annulus $Q$ in the exterior of a genus two handlebody knot is of Type 1, 2, 3-1, 3-2, 3-3, 4-1, or 4-2. Since $X(V)$ is atoroidal, there is no annulus of Type 4-2. By \cite{KOW}, if $V$ admits an annulus of Type 4-1, then it also admits an essential annulus of Type 3-2. As this is not explicitly stated in \cite{KOW}, we show how it follows from the results that are stated. We assume familiarity with the terminology of that paper.

    Suppose that $V$ admits an annulus of Type 4-1. By Remark \ref{EM type}, the annulus is of Eudave-Mu\~noz Type. Theorem 2.2, Lemma 3.2, and Definition 3.3, all in \cite{KOW}, imply that either the pair $(S^3, V)$ is of type $K$ or is of Type $M$. In the first case, the exterior of $V$ has exactly one characteristic annulus. In which case, by \cite[Lemma 4.9]{KOW} (which is a rewriting of \cite[Lemma 2.3]{Wang}) and the notation described on \cite[Page 3--4]{KOW}, that annulus is an essential annulus of Type 3-2. On the other hand, if $(S^3, V)$ is of Type $M$, then $E(V)$ has two non-characteristic annuli, one of which is Type 4-1. By \cite[Theorem 5.2]{KOW}, the other non-characteristic annulus is Type 3-2.  (Also by \cite[Lemma 2.3, Theorem 4.3, Theorem 2.3]{KOW}, there are two characteristic annuli both of type 3-2.)    

    Type 3-1 cannot occur by the hypothesis that $X(V)$ is $\boundary$-irreducible. 
 \end{proof}

\begin{corollary}\label{Guardrails exist}
Suppose that $V \subset S^3$ is a $\boundary$-irreducible, atoroidal genus 2 handlebody knot admitting an essential annulus $Q$ in its exterior. Then there is such an annulus admitting a guardrail.
\end{corollary}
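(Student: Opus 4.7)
The plan is to apply Lemma~\ref{exists 123} to obtain an essential annulus in $X(V)$ of Type~1, 2, 3-2, or 3-3, and then to exhibit a guardrail for that annulus in each of these four cases. All of the heavy lifting will be absorbed into Lemma~\ref{exists 123}, which already packages the Koda--Ozawa--Wang classification: $\boundary$-irreducibility of $X(V)$ excludes Type~3-1, atoroidality excludes Type~4-2, and Type~4-1 gets rewritten as Type~3-2 in the atoroidal setting. So only a brief case check remains, and I do not expect any genuine obstacle.

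For Types~3-2 and 3-3, I expect the guardrail to be built into the definition. Both types require the existence of a compressing disc $D \subset V$ for $\boundary V$ with $\boundary D$ disjoint from $\boundary Q$, and such a $D$ is exactly an essential disc in $V$ whose boundary avoids $\boundary Q$, i.e., a guardrail.

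For Types~1 and 2, at least one component $m$ of $\boundary Q$ is a meridian of $V$, and I would construct a guardrail by a small parallel push-off. Let $D \subset V$ be a meridional disc with $\boundary D = m$, and let $D \times [-1,1] \subset V$ be a bicollar of $D$ in $V$. The push-off $D' = D \times \{1\}$ has boundary $m' = \boundary D'$ lying in a thin annular neighborhood of $m$ in $\boundary V$ and disjoint from $m$ itself. Taking the bicollar thin enough ensures that $m'$ also misses the other component of $\boundary Q$, which is a compact curve disjoint from $m$. Since $m'$ is isotopic in $\boundary V$ to the essential curve $m$, it is itself essential, so $D'$ is an essential disc in $V$ whose boundary is disjoint from $\boundary Q$; in other words, $D'$ is a guardrail for $Q$.
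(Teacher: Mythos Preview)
Your proof is correct and follows essentially the same approach as the paper: invoke Lemma~\ref{exists 123} to reduce to Types 1, 2, 3-2, 3-3, observe that Types 3-2 and 3-3 carry a guardrail by definition, and for Types 1 and 2 use a push-off of a meridional boundary component. Your treatment of the push-off is just a more explicit version of what the paper states in a single phrase.
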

\begin{proof}
 By Lemma \ref{exists 123}, we may assume that $Q$ is of type 1, 2, 3-2, or 3-3. If $Q$ is of type 1 or 2, then a pushoff of a component of $\boundary Q$ is a meridian. Types 3-2 and 3-3 admit a guardrail by definition. 
\end{proof}

\begin{remark}
In what follows, we will use the specific guardrails for each type of annulus, as in the proof of Corollary \ref{Guardrails exist}. 
\end{remark}

\section{Specific annulus configurations}\label{sec specific}

In this section, we examine the cases where the guardrail for the putative annulus $Q$ is a meridian, not just of $\nbhd(\Gamma)$, but also of $\Gamma$ itself. The most involved situation is when $Q$ is a Type 3-2 annulus. We split that case into two subcases. The first is when $Q$ is what we call a ``tubed unknotting disc annulus'' (TUDA).  A TUDA is constructed by tubing together two parallel copies of an unknotting disc for one of the cycles of $\Gamma$. It is a particular type of Type 3-2 annulus having a guardrail that is a meridian of $\Gamma$.

\begin{definition}\label{TUDA}
    Suppose that $\Gamma$ is a $\theta$-curve having a constituent unknot $\sigma$ with the property that $\sigma$ does not bound a disc with interior disjoint from $\Gamma$. Suppose that $Q$ is an essential annulus properly embedded in the exterior of $\Gamma$ such that $\boundary Q$ is a pair of essential curves in $\boundary \nbhd(\sigma)$, and so that $Q$ is compressible in the exterior of $\sigma$. (Necessarily, $\boundary Q$ consists of a pair of preferred longitudes on $\boundary \nbhd(\sigma)$. We say that $Q$ is a \defn{tubed unknotting disc annulus (TUDA)} See Figure \ref{fig TUDA}.
\end{definition}

\begin{figure}
\includegraphics[scale=0.4]{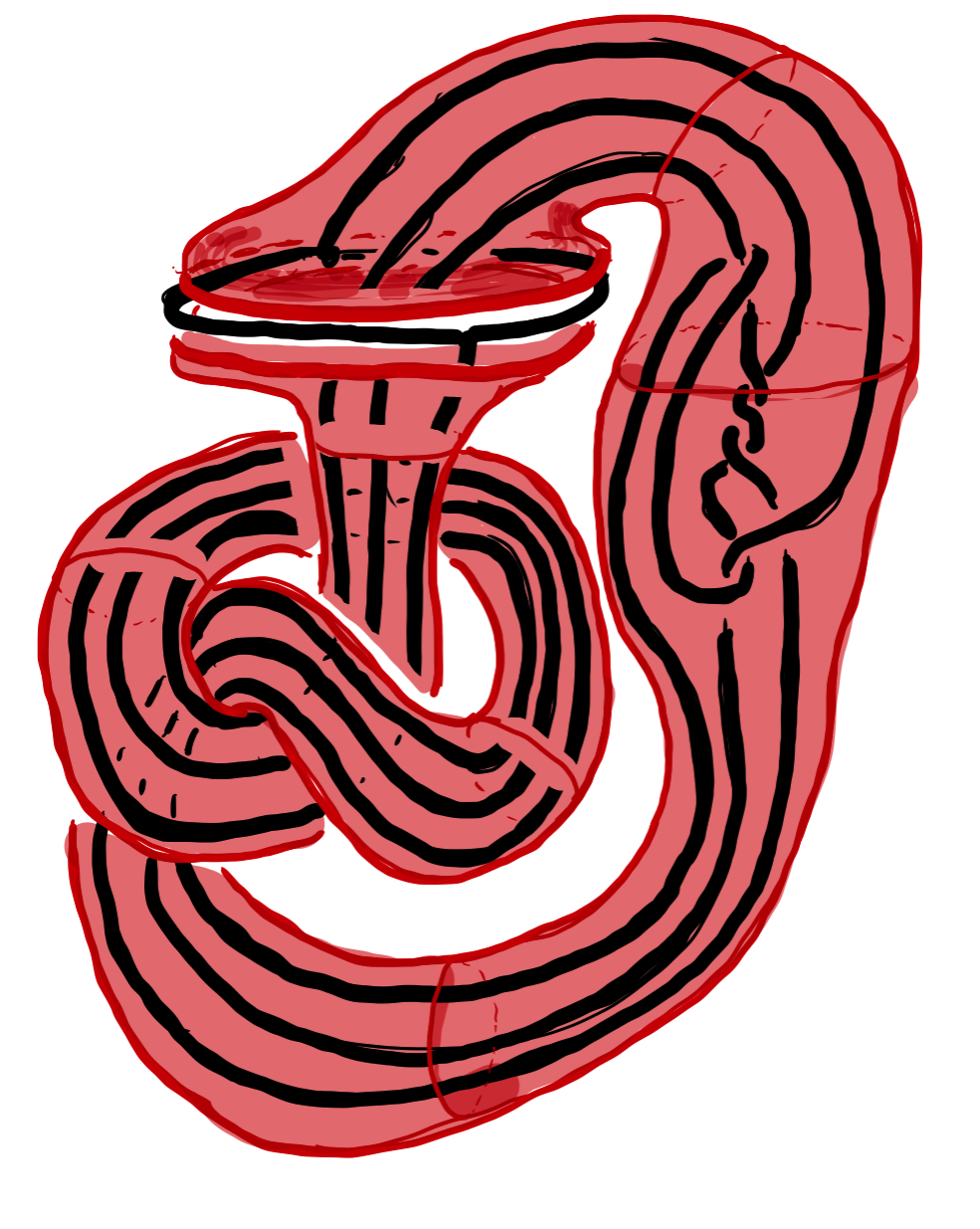}
\caption{An example of a tubed unknotting disc annulus (TUDA). Its boundary components are preferred longitudes on a cycle $\sigma \subset \Gamma$.}
\label{fig TUDA}
\end{figure}

When $Q$ is a TUDA, we show we can attach an annulus to $\boundary Q$ to create an essential torus in $X(\Gamma)$. The second subcase is when $Q$ is not a TUDA. In this case, we use techniques inspired by \cite[Section 3]{GoLi} to find a contradiction. We prove:

\begin{proposition}\label{Step 1}
If $\Gamma$ is a Brunnian $\theta$-curve such that $X(\Gamma)$ contains an essential annulus $Q$ of Type 1, 2, 3-2, or 3-3 with guardrail a meridian of $\Gamma$, then $Q$ is a TUDA and $X(\Gamma)$ contains an essential torus.
\end{proposition}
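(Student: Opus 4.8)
The plan is to push everything onto the unknotted cycle disjoint from the guardrail edge. Since the guardrail $a$ is a meridian of $\Gamma$, it is the meridian of a single edge, say $e_3$; let $\sigma = K_{12}$ be the complementary cycle, which is unknotted because $\Gamma$ is Brunnian. Writing $W = X(\sigma)$, an unknotted solid torus, the edge $e_3$ becomes a properly embedded arc in $W$ with endpoints on $T_\sigma = \boundary \nbhd(\sigma)$, and $X(\Gamma) = W \setminus \nbhd(e_3)$. Because $\boundary Q$ is disjoint from $a$, after pushing $\boundary Q$ off the $e_3$\hyph handle it lies on $T_\sigma$; since $Q$ is essential and $X(\Gamma)$ is $\boundary$\hyph irreducible (Lemma \ref{bd irred}), each component is essential on $T_\sigma$, so the two components are parallel $(p,q)$\hyph curves with respect to the basis $(\mu, \lambda)$ of meridian and preferred longitude of $\sigma$. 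First I would dispose of the possibility that a component of $\boundary Q$ is trivial on $T_\sigma$, encircling a foot of the handle, i.e. a meridian of the edge $e_3$ itself (the relevant guise of Type~1): capping $Q$ off with two meridian discs of $e_3$ yields a $2$\hyph sphere meeting $\Gamma$ in two points of $e_3$, which either reduces $Q$ or exhibits a nontrivial summand of a constituent knot, contradicting the Brunnian hypothesis.

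The main dichotomy is whether $Q$ is compressible in the solid torus $W$. If it is, compressing shows each component of $\boundary Q$ bounds a disc in $W$, hence is a meridian of $W$, i.e. a preferred longitude $\lambda$ of $\sigma$; this is precisely the slope $p=0$. To conclude that $Q$ is a TUDA I would verify the remaining hypothesis of Definition \ref{TUDA}, that $\sigma$ does not bound a disc disjoint from $\Gamma$. Otherwise the meridian disc of $W$ could be taken disjoint from $e_3$, and since every properly embedded arc in a ball is $\boundary$\hyph parallel, $e_3$ could be isotoped so that $\Gamma$ lies on a sphere, making it trivial and contradicting nontriviality.

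If instead $Q$ is incompressible in $W$ (equivalently $p \neq 0$), then, as a solid torus has no essential annulus, $Q$ is $\boundary$\hyph parallel in $W$, cutting off a product region $R$ from a boundary annulus $A' \subset T_\sigma$. If the feet of the $e_3$\hyph handle lie outside $A'$, then $R$ is disjoint from $e_3$ and $Q$ is $\boundary$\hyph parallel in $X(\Gamma)$, contradicting essentiality. If the feet lie in $A'$, so that $e_3 \subset R$, then the complementary solid torus $W' = \overline{W \setminus R}$ is free of feet; when $|p| \le 1$ the annulus $Q$ is longitudinal in $W'$, making $W'$ a product over $Q$ and again forcing $\boundary$\hyph parallelism in $X(\Gamma)$. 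The case that does not collapse is $|p| \ge 2$ with $e_3 \subset R$, i.e. a non\hyph TUDA annulus, and this is the crux. Here I would adapt the intersection\hyph graph techniques of \cite[Section~3]{GoLi}: intersect $Q$ with an unknotting disc $\Delta$ for $\sigma$ chosen to meet $e_3$ minimally (necessarily in at least two points, by Lemma \ref{basic fact} together with the previous paragraph), remove closed curves of $\Delta \cap Q$ by innermost\hyph disc moves, and analyze the graph of arcs on $\Delta$ whose vertices are the points of $\Delta \cap e_3$. A Scharlemann\hyph cycle or outermost\hyph arc analysis should then produce either a compression of $Q$ (returning us to the longitudinal case) or an essential disc in $X(\sigma)$ meeting $e_3$ in a single point, contradicting Lemma \ref{basic fact}. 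I expect this combinatorial step to be the main obstacle, since it requires controlling how the $|p|$\hyph fold winding of $\boundary Q$ interacts with the two\hyph or\hyph more intersection points of $e_3$ with $\Delta$.

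Finally, with $Q$ a TUDA, so $\boundary Q$ is a pair of preferred longitudes, I would build the essential torus by capping. The two longitudes cut $T_\sigma$ into two annuli; the feet of the $e_3$\hyph handle lie on the band containing $\sigma$, so the complementary annulus $A_0 \subset T_\sigma$ is disjoint from $e_3$ and lies in $\boundary V$. Then $\widehat{T} = Q \cup A_0$ is a torus in $X(\Gamma)$. It is incompressible: a compressing disc may be isotoped so its boundary lies in $Q$ or in $A_0$, but a boundary curve essential in $Q$ would compress the essential annulus $Q$, while one essential in $A_0$ would be a longitude bounding a disc in $X(\Gamma)$, contradicting $\boundary$\hyph irreducibility (Lemma \ref{bd irred}). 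As $\boundary X(\Gamma)$ has genus two, $\widehat{T}$ cannot be $\boundary$\hyph parallel, so it is essential and $X(\Gamma)$ is toroidal, as claimed.
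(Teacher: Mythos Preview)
Your overall architecture matches the paper's: dispose of the cases where $\boundary Q$ has a meridional component, split Type 3-2 into the compressible-in-$X(\sigma)$ (TUDA) and incompressible cases, and build the torus from the TUDA. But there are real gaps in three places.

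\textbf{The non-TUDA case.} You correctly identify this as the crux, but your sketch and the paper's argument diverge in a substantive way. You propose to intersect $Q$ with an unknotting disc for $\sigma = K_{12}$ and run a Scharlemann-cycle/outermost-arc analysis on the resulting graph. The paper instead uses an unknotting disc $\Delta$ for the \emph{other} cycle $K_{13} = e_1 \cup e_3$ (which is also unknotted, by Brunnianness), normalised relative to $U = \nbhd(\sigma)$. The payoff is that the pieces of $\Delta$ inside $U$ organise $K_{23}$ as a band sum (the band being a thickened $\epsilon_U$), and Scharlemann's theorem that an unknotted band sum is a connected sum forces $\Delta \cap S_U = \epsilon_U$, hence $\psi = e_3 \cap X(U)$ is $\boundary$-parallel, contradicting essentiality of $Q$. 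A separate slope computation on $\boundary X(U \cup V)$ then rules out the case where $\Delta \cap U$ contains a genuine meridian disc. Your proposed combinatorics on the $\sigma$-disc has no obvious substitute for the band-sum step; it is not clear what forces a Scharlemann cycle or a once-punctured disc to appear, and you have not said what the labels on the fat-vertex graph would even be. This is the main missing idea.

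\textbf{The torus is essential.} Your argument that a compressing disc for $\widehat T = Q \cup A_0$ ``may be isotoped so its boundary lies in $Q$ or in $A_0$'' is not valid: an essential curve on $\widehat T$ need not be isotopic to the core of either piece. The paper's fix is to observe that on the $\Gamma$-side the unique compressing slope is the core of $A_0$ (a longitude of $\sigma$), so $\widehat T$ is incompressible there; if it also compressed on the other side it would be a Heegaard torus, and the dual compressing disc could be taken to meet $A_0$ in a single spanning arc, yielding a $\boundary$-compression of $Q$ in $X(\Gamma)$, a contradiction.

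\textbf{Missing cases and a false claim.} You only dispose of the situation where \emph{both} components of $\boundary Q$ are meridians of $e_3$; the Type~2 case (exactly one such component) is not handled by capping with two discs. Capping the single meridional component produces a disc in $X(\sigma)$ meeting $e_3$ once, which is what Lemma~\ref{basic fact} excludes --- the paper does exactly this. Also, ``every properly embedded arc in a ball is $\boundary$-parallel'' is false (knotted arcs exist); the correct reason $\sigma$ cannot bound a disc with interior disjoint from $\Gamma$ is that such a disc would restrict to an essential disc in $X(\Gamma)$, violating Lemma~\ref{bd irred} (equivalently, Brunnianness of $K_{13}$ forces $e_3$ to be unknotted in that ball).
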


The rest of this section is devoted to the proof of Proposition \ref{Step 1}. Throughout this section, let $\Gamma$ be a $\theta$-curve and $Q \subset X(\Gamma)$ an essential annulus. In each case, we show that either $\Gamma$ cannot be Brunnian or (when $Q$ is a TUDA) that $X(\Gamma)$ admits an essential torus.  Consider the options:

\subsection{$Q$ is a Type 1 annulus with each component of $\boundary Q$ a meridian of $\Gamma$}

 We can attach meridional discs in $\nbhd(\Gamma)$ to $\boundary Q$ to get a sphere $P$ intersecting $\Gamma$ in two points. The sphere $P$ separates $S^3$ into two 3-balls. One of the 3-balls $B$ must intersect $\Gamma$ in a subarc $\tau$ of an edge of $\Gamma$. If $\tau$ were knotted, at least two constituent knots of $\Gamma$ would be nontrivial, implying $\Gamma$ is not Brunnian. On the other hand, if $\tau$ were unknotted, this would imply $Q$ was $\boundary$-parallel in $X(\Gamma)$, a contradiction.

\subsection{$Q$ is a Type 2 annulus with one boundary component a meridian of $\Gamma$}

Suppose $Q$ is a Type 1 annulus with one component of $\boundary Q$ a meridian of $\Gamma$. Attach a meridional disc in $\nbhd(\Gamma)$ to that component of $\boundary Q$ to create a disc $P$. Then $P$ is a $\boundary$-compressing disc for the exterior of a cycle $\sigma$ of $\Gamma$; it intersects the edge $\Gamma\setminus \sigma$ exactly once. By Lemma \ref{basic fact}, $\Gamma$ is not Brunnian.

\subsection{Type 3-3 annulus with guardrail a meridian of $\Gamma$}

$Q$ cannot be of Type 3-3, since each meridian of $\Gamma$ is nonseparating in $\boundary X(\Gamma)$.

It remains to consider the Type 3-2 annuli. Suppose $Q$ is a Type 3-2 annulus with guardrail a meridian of $\Gamma$. Let $\sigma \subset \Gamma$ be the cycle consisting of the two edges that are not dual to the guardrail. Then $X(\sigma)$ is a solid torus and $Q$ is either incompressible or compressible in $X(\sigma)$. If it is compressible, then upon compressing it, we create two essential discs in $X(\sigma)$; the annulus $Q$ is, therefore, a TUDA. On the other hand $Q$ may be incompressible in $X(\sigma)$.

\subsection{$Q$ is a Type 3-2 annulus that is a TUDA} 

\begin{lemma}\label{lem: TUDA}
Suppose that $\Gamma$ is a $\theta$-curve with $\boundary$-irreducible exterior admitting a TUDA, then $X(\Gamma)$ admits an essential torus.
\end{lemma}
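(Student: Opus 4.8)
The plan is to cap the TUDA $Q$ off by an annulus lying on $\boundary\nbhd(\sigma)$, producing a torus $T$, and then to show that $T$ is essential. By the definition of a TUDA, $\boundary Q$ is a pair of preferred longitudes of $\sigma$; in particular the two components of $\boundary Q$ are parallel on the torus $\boundary\nbhd(\sigma)$ and so cobound two annuli there. Both endpoints of the third edge $\Gamma\setminus\sigma$ lie on $\sigma$, hence in one of these two annuli, so I may let $A$ be the other one; then $A\subset\boundary\nbhd(\sigma)\cap\boundary\nbhd(\Gamma)$. The surface $T:=Q\cup_{\boundary Q}A$ is obtained by gluing two annuli along both of their boundary circles, so it is a closed, connected, orientable surface of Euler characteristic $0$, i.e.\ a torus; pushing $A$ (and a collar of $\boundary Q$) slightly off $\boundary\nbhd(\Gamma)$ makes $T$ properly embedded in the interior of $X(\Gamma)$. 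I will use that $X(\Gamma)$ is irreducible --- any $2$-sphere in $X(\Gamma)$ bounds, in $S^3$, a ball disjoint from the connected graph $\Gamma$, hence a ball in $X(\Gamma)$ --- and that it is $\boundary$-irreducible by hypothesis. Since $Q$ is essential, it is therefore incompressible, and, being incompressible but not $\boundary$-parallel in an irreducible, $\boundary$-irreducible manifold, it is also $\boundary$-incompressible.

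Then I would show that $T$ is incompressible. Let $D$ be a compressing disc, chosen with $|\boundary D\cap\boundary Q|$ minimal, and let $\kappa$ denote the common core of the annuli $Q$ and $A$. If $\boundary D$ is disjoint from $\boundary Q$, it lies in $Q$ or in $A$ and is essential there (an inessential curve would be inessential on $T$). A curve essential in $Q$ makes $D$ a compressing disc for $Q$, contradicting incompressibility of $Q$; a curve essential in $A\subset\boundary\nbhd(\Gamma)$ makes $D$ a compressing disc for $\boundary\nbhd(\Gamma)$, contradicting $\boundary$-irreducibility. If instead $\boundary D$ meets $\boundary Q$, then by minimality every arc of $\boundary D\cap A$ and of $\boundary D\cap Q$ is essential (spanning) in the respective annulus, and an outermost-arc argument extracts from $D$ a subdisc whose boundary is the union of an essential spanning arc of $Q$ with an arc that can be slid onto $\boundary\nbhd(\Gamma)$ through the collar between $A$ and $\boundary\nbhd(\sigma)$. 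This subdisc is a $\boundary$-compressing disc for $Q$, contradicting $\boundary$-incompressibility of $Q$. Hence no compressing disc exists and $T$ is incompressible.

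Finally, $T$ is not $\boundary$-parallel: a $\boundary$-parallel torus cobounds a product region $T^2\times I$ with a torus component of $\boundary X(\Gamma)$, whereas $\boundary X(\Gamma)=\boundary\nbhd(\Gamma)$ is a single genus $2$ surface with no torus components. An incompressible torus that is not $\boundary$-parallel is essential, so $X(\Gamma)$ is toroidal, as claimed.

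The step I expect to be the main obstacle is the second case of the incompressibility argument: turning a compressing disc for $T$ whose boundary crosses $\boundary Q$ into a genuine $\boundary$-compressing disc for $Q$. This is where the outermost-arc bookkeeping on $T=Q\cup A$ must be done carefully, together with the observation that the portion of $\boundary D$ lying in $A$ can be pushed onto $\boundary\nbhd(\Gamma)$ through the product collar $A\times I$, so that an interior compression of $T$ becomes a boundary compression of the essential annulus $Q$. A secondary point requiring care is that the annulus $A$ genuinely misses the feet of $\Gamma\setminus\sigma$ and lies in $\boundary\nbhd(\Gamma)$; this is precisely where the hypothesis that $\boundary Q$ consists of preferred longitudes (equivalently, that $Q$ is compressible in the exterior of $\sigma$) is used, since it guarantees that the two components of $\boundary Q$ are parallel and bound the capping annulus $A$.
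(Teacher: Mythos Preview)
Your overall strategy coincides with the paper's: cap $Q$ off with an annulus $A\subset\boundary\nbhd(\sigma)$ to form a torus $T=\wihat Q$, then derive a contradiction from a putative compressing disc $D$ by producing either a compressing disc for $\boundary X(\Gamma)$ or a $\boundary$-compressing disc for $Q$. The case $\boundary D\cap\boundary Q=\nil$ is handled correctly, and your observation that an essential annulus in an irreducible, $\boundary$-irreducible manifold is $\boundary$-incompressible is a clean way to finish once a $\boundary$-compression of $Q$ is in hand.

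The genuine gap is exactly where you anticipated. When $\boundary D\cap\boundary Q\neq\nil$, there is no outermost-arc argument available: since $D$ is a compressing disc for $T$, its interior is disjoint from $T\supset Q$, so $D\cap Q=\boundary D\cap Q$ consists solely of arcs lying in $\boundary D$. These arcs go around the boundary circle and none is ``outermost'' in $D$; there is no properly embedded arc in the interior of $D$ to cut along. After pushing the $A$-arcs through the collar you obtain a disc $D'$ whose boundary alternates between $k$ spanning arcs on $Q$ and $k$ arcs on $\boundary X(\Gamma)$, but for $k\geq 2$ this is not a $\boundary$-compressing disc for $Q$, and nothing you have said forces $k=1$.

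The missing ingredient, supplied in the paper, is global: once $D$ lies on the side of $T$ not containing $\Gamma$, that side is a solid torus, while the side containing $\Gamma$ is already known to be a solid torus whose meridian is the core of $A$ (this is where the compressibility of $Q$ in $X(\sigma)$, i.e.\ the TUDA hypothesis, is used). Hence $T$ is a Heegaard torus for $S^3$, so the two meridians intersect exactly once, and $\boundary D$ may be isotoped to meet $A$ in a single spanning arc. With $k=1$ the entire disc $D$ (after sliding the one $A$-arc onto $\boundary\nbhd(\Gamma)$) \emph{is} the desired $\boundary$-compressing disc for $Q$. You should replace the outermost-arc sentence with this Heegaard argument.

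A smaller point: your justification that both feet of $e_3$ lie in the same annulus of $\boundary\nbhd(\sigma)\setminus\boundary Q$ (``both endpoints lie on $\sigma$, hence in one of these two annuli'') is not an argument. The correct reason, as in the paper, is a parity argument: if the two feet were in different annuli, the torus $Q\cup A$ would meet the arc $e_3$ algebraically once, contradicting the fact that every torus in $S^3$ separates.
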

\begin{proof}
Assume $Q$ is a TUDA; it is compressible by definition. Since compressing $Q$ results in two essential discs, $\boundary Q$ must consist of preferred longitudes of $\sigma$. The curves $\boundary Q$ divide $\boundary \nbhd(\sigma)$ into two annuli. Attaching either of them to $Q$ creates a torus. Since every torus separates $S^3$ and since $\Gamma$ is a $\theta$-curve, one of those annuli $A$ must be disjoint from $\Gamma$ and the other intersects $\Gamma\setminus \sigma$ twice. Let $\wihat{Q} = Q \cup A$. Isotope the torus $\wihat{Q}$ slightly so that it is properly embedded in $X(\Gamma)$. It contains $\Gamma$ on the side to which $Q$ compresses; there is a unique isotopy class of curve on $\wihat{Q}$ bounding a compressing disc in $S^3$ to that side. Since the core curve of $A$ is a preferred longitude for the unknot $\sigma$, that must represent the isotopy class. The torus $\wihat{Q}$ is incompressible to the side containing $\Gamma$. If it were compressible to the side not containing $\Gamma$, the torus $\wihat{Q}$ would be a Heegaard torus for $S^3$ and so the compressing disc could be isotoped so its boundary intersects $A \subset \wihat{Q}$ in a single spanning arc. This would imply that $Q$ was $\boundary$-compressible. Performing the $\boundary$-compression creates an essential disc $D$ in $X(\Gamma)$, contradicting the hypothesis that $X(\Gamma)$ is $\boundary$-irreducible.
 \end{proof}

\begin{remark}
    We do not know if it is possible for a (necessarily toroidal) Brunnian $\theta$-curve to admit a tubed unknotting disc annulus. 
\end{remark}

\subsection{$Q$ is a Type 3-2 annulus with guardrail a meridian of $\Gamma$ and $Q$ is incompressible in the exterior of the cycle $\sigma$ disjoint from the edge dual to the guardrail.} \label{nonTUDA}

Let $Q$ be such an annulus; it is not a TUDA. See Figure \ref{SFSNotation1} for a depiction of the notation used in this case. Number the edges of $\Gamma$ so that $e_3$ is dual to the guardrail that is a meridian of $\Gamma$. Let $\sigma = e_1 \cup e_2$. Let $U = \nbhd(\sigma)$. Then $\boundary Q$ consists of two parallel curves on $\boundary U$ dividing it into two annuli $A_1$ and $A_2$. Since $\sigma$ is an unknot, $Q$ is parallel to one of $A_1$, $A_2$. Since $Q$ is essential in $X(\Gamma)$, it is parallel to exactly one of them. Choose the labelling so that $Q$ is parallel to $A_1$; together they cobound a solid torus $V$ containing the arc $\psi = e_3\setminus U$. Let $S_U = \boundary U$ and $S_V = \boundary V$ so that $S_U \cap S_V = A_1$. Let $\Delta$ be a disc with boundary $K_{13} = e_1 \cup e_3 = \psi \cup \phi_1$ and transverse to $(\boundary U) \cup Q = S_U \cup S_V$. Let $e_3^\pm$ be the components of $e_3 \cap U$ and let 
$\phi_1 = e_1 \cup e_3^+ \cup e_3^-$ and $\phi_2 = e_2 \cup e_3^+ \cup e_3^-$. These are arcs properly embedded in $U$.   

Suppose the annulus $A_1$ wraps $p \in \Z$ times longitudinally around $U$ and $q \geq 0$ times meridionally around $U$. In fact, $q \geq 2$. For if $q = 0$, then $X(\Gamma)$ would be $\boundary$-reducible and if $q = 1$, then $Q$ would be $\boundary$-compressible in $X(\Gamma)$. Also, $p \neq 0$, for if $p = 0$, then $Q$ would be a Type 1 annulus with both components of $\boundary Q$ meridians of $\Gamma$.

\begin{figure}
\labellist
\small\hair 2pt
\pinlabel {$q$} [t] at 217 46
\pinlabel {$A_1$} at 305 258 
\pinlabel {$U$} at 118 113
\pinlabel {$e_2$} [br] at 130 257
\pinlabel {$e_1$} [t] at 135 133
\pinlabel {$e_3$} [t] at 81 213
\pinlabel {$e_3$} [t] at 268 182
\pinlabel {$A_2$} at 344 198
\pinlabel {$\begin{array}{rcl}
S_V = \boundary V &=& Q \cup A_1\\
\boundary U &=& A_1 \cup A_2 \\
\sigma &=& e_1 \cup e_2\\
\psi &=& e_3 \cap V\\
\phi_i &=& e_i \cup (e_3 \setminus V), ~i =1,2
\end{array}
$} [l] at 0 35
\endlabellist
\centering
    \includegraphics[scale=0.88]{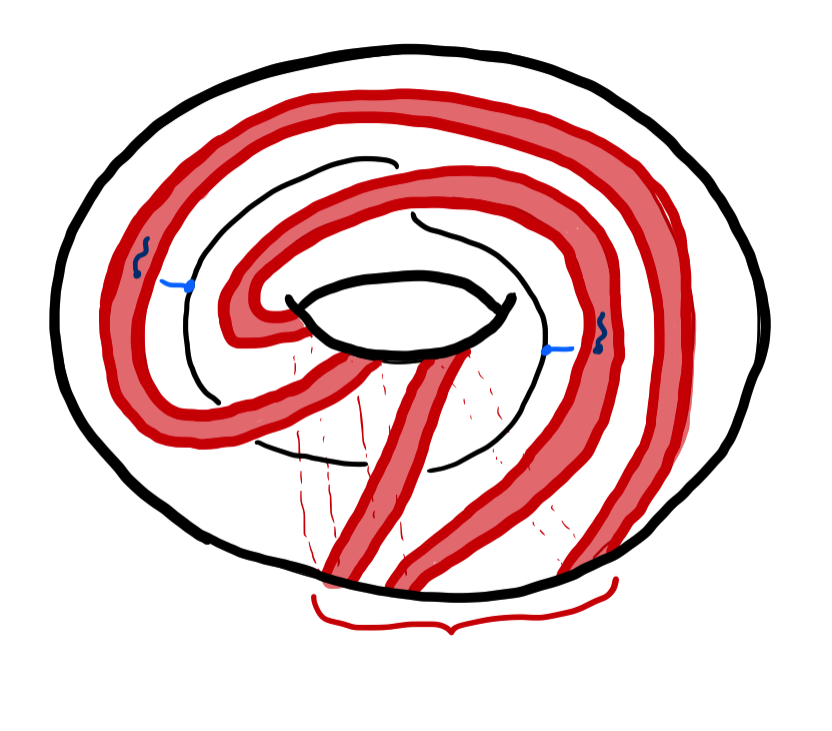}
    \caption{The notation used in Special Case \ref{nonTUDA}. We do not show the annulus $Q$ which is parallel to the annulus $A_1 \subset \boundary U$.}
    \label{SFSNotation1}
\end{figure}

\begin{definition}\label{def:normal}
We say that $\Delta$ is \defn{normal} with respect to $U$ if each component of $\Delta \cap U$ is one of the following (see Figure \ref{SFSNotation2}):
\begin{enumerate}
    \item A disc $\Delta_U$ whose boundary is the union of $\phi_1$ with an arc $\epsilon_U \subset \boundary U$ which is transverse to $\boundary A_1$. (There is a unique such component)
    \item A meridional disc $D$ of $U$ whose boundary intersects $A_1$ in exactly $p$ spanning arcs
    \item A disc $D$ such that $\boundary D$ is the frontier of a regular neighborhood of $\epsilon_U$ and, in $S_U\setminus \nbhd{\boundary \phi_1}$, intersects $\boundary A_1$ minimally up to isotopy.
\end{enumerate}
\end{definition}

\begin{figure}
    \includegraphics[scale=0.7]{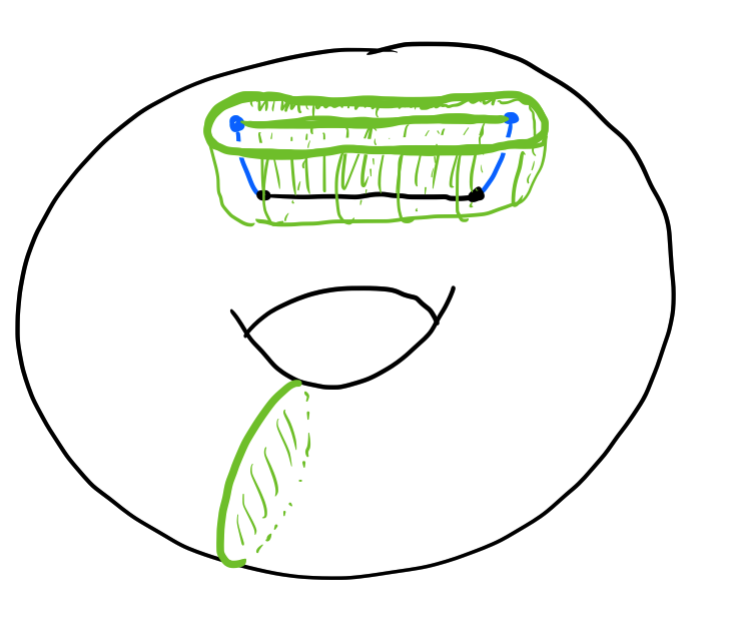}
    \caption{A depiction of the notation in Definition \ref{def:normal}.}
    \label{SFSNotation2}
\end{figure}

Since $\phi_1$ is $\boundary$-parallel in $U$, there is an isotopy rel $K_{13}$ of $\Delta$ so that it is normal with respect to $U$. Out of all such discs that are normal with respect to $U$, choose $\Delta$ to minimize
\[
c(\Delta) = (|\Delta \cap U|, |\Delta \cap Q|, |\Delta \cap A_2|).
\]
We compare tuples lexicographically.

\textbf{Claim 1:} Either $\Delta \cap X(U)$ is incompressible and $\boundary$-incompressible in $X(U)$ or $\Delta \cap S_U = \epsilon_U$.

Suppose, to the contrary, that $E \subset X(U)$ is a compressing disc for $\Delta \cap X(U)$. Since $\Delta$ is a disc, $\boundary E$ bounds a subdisc $E' \subset \Delta$. Since $E$ is a compressing disc, $E'$ intersects $U$. Thus an isotopy taking $E'$ to $E$, preserves normality but reduces $c(\Delta)$, a contradiction. Thus, $\Delta \cap X(U)$ is incompressible in $X(U)$.

Assume that $\Delta \cap S_U \neq \epsilon_U$. Suppose, to establish a contradiction, that $D \subset X(U)$ is a $\boundary$-compressing disc in $X(U)$ for $\Delta \cap X(U)$. $D$ is a disc whose boundary is the union of an arc $\delta = D \cap S_U$ and an essential arc in $\Delta \cap X(U)$. The simple closed curves $(\Delta \cap S_U) \setminus \epsilon_U$ divide $S_U$ into subsurfaces; let $S$ be subsurface containing $\epsilon_U$. The surface $S$ is one of the following:
\begin{enumerate}
    \item An annulus disjoint from $\epsilon_U$
    \item An annulus containing $\epsilon_U$
    \item A disc containing $\epsilon_U$
    \item A genus one surface with one boundary component, disjoint from $\epsilon_U$
\end{enumerate}

In fact, $S$ cannot be of type (1), since then $\Delta \cap X(U)$ would be compressible. The disc $D$ guides an isotopy of $\Delta$ rel $\boundary \Delta$ that effects the $\boundary$-compression. If $\delta$ joins distinct components of $(\Delta \cap S_U)\setminus \epsilon_U$, then after the isotopy, $\Delta$ is still normal with respect to $U$, but $|\Delta \cap U|$ has been reduced, a contradiction. If $\delta$ joins $\epsilon_U$ to a circle component of $\Delta \cap S_U$, then after the isotopy, $\Delta$ is still normal with respect to $U$, but the arc $\epsilon_U$ has been converted into an arc obtained by banding the original $\epsilon_U$ to the circle. The isotopy again reduces $|\Delta \cap U|$, a contradiction.  

Assume, therefore, that $\Delta$ joins a component of $\Delta \cap S_U$ to itself. In this case, a disc component of $\Delta \cap U$ is converted into an annulus $\alpha$. It can be checked that in each case, either one of the ends of $\alpha$ bounds a disc in $U \setminus \phi_1$, or the ends of $\alpha$ are parallel curves on $S_U$. 

Suppose that an end $c$ of $\alpha$ bounds a disc $E \subset U \setminus \phi_1$. Since $\Delta$ is a disc, $c$ also bounds a subdisc $E' \subset \Delta$. The disc $E'$ must have interior intersecting $S_U$. An isotopy of $E'$ to $E$ and then bumping slightly off $S_U$ creates a disc contradicting our choice of $\Delta$. On the other hand, if the ends of $\alpha$ are parallel simple closed curves in $S_U$, they cobound an annulus in $S_U$ disjoint from $\epsilon_U$. After performing an annulus swap on $\Delta$, we create a disc contradicting our initial choice of $\Delta$. \\
\qed(Claim 1)

\textbf{Claim 2:} $\psi$ is not $\boundary$-parallel in $X(U)$.

Suppose to the contrary that it is. Let $D$ be a disc of parallelism, chosen to intersect $Q$ minimally. Since $Q$ is incompressible, there are no circles of intersection. If there is an arc of intersection, it must span $Q$. An outermost such arc in $D$ then cuts off a $\boundary$-compressing disc for $Q$ in $X(\Gamma)$, a contradiction. \qed(Claim 2)

\textbf{Claim 3:} $\Delta \cap U$ contains at least one meridional disc

Suppose to the contrary that $\Delta \cap U$ consists only of the disc $\Delta_U$ and discs of Type (3) in Definition \ref{def:normal}. See Figure \ref{bandfig} for a depiction of the notation in this paragraph. Each such disc $D$ has boundary bounding a disc $D' \subset S_U$ containing $\epsilon_U$. Let $D$ and $D'$ be the outermost such discs. Then in $U$, $D \cup D'$ cobound a product region $B = D' \times [0,1]$, where $D' = D' \times \{0\}$. We may isotope the product structure, so that the disc $\Delta_U = \epsilon_U \times [0,1/2]$ and so that each other disc of $\Delta \cap U$, apart from $D \cup \Delta_U$, is the union of a vertical annulus and a horizontal disc. Extend the product structure to $D' \times [0, 3/2]$ using normal vectors to $D$. We may then isotope $\phi_2$, relative to its endpoints, to an arc $\phi'_2$ whose intersection with $D' \times [0,3/2]$ is equal to $\boundary \psi \times [0,3/2]$. This isotopy may change the graph $\Gamma$ by performing crossing changes between $e_1$ and $e_2$. The knot $\phi'_2 \cup \psi$ is isotopic to $K_{23}$ and is, therefore, an unknot. However, we see that it is also the result of banding the knot $K_{12} = \phi_1 \cup \psi$ to the knot $\phi'_2 \cup (\epsilon_U \times \{3/2\})$ using the band $\Phi = \epsilon_U \times [0,3/2]$. 

\begin{figure}[ht!]
\labellist
\small\hair 2pt
\pinlabel {$B$} at 310 346
\pinlabel {$\phi'_2$} [b] at 292 439
\pinlabel {$\Phi$} at 411 347
\pinlabel {$\Delta_U$} at 410 185
\pinlabel {$S_U$} at 73 97
\pinlabel {$U$} at 161 291
\pinlabel {$D$} [r] at 259 363
\pinlabel {$D'$} [b] at 244 168
\pinlabel {$\psi$} [t] at 310 47
\endlabellist
    \includegraphics[scale=0.5]{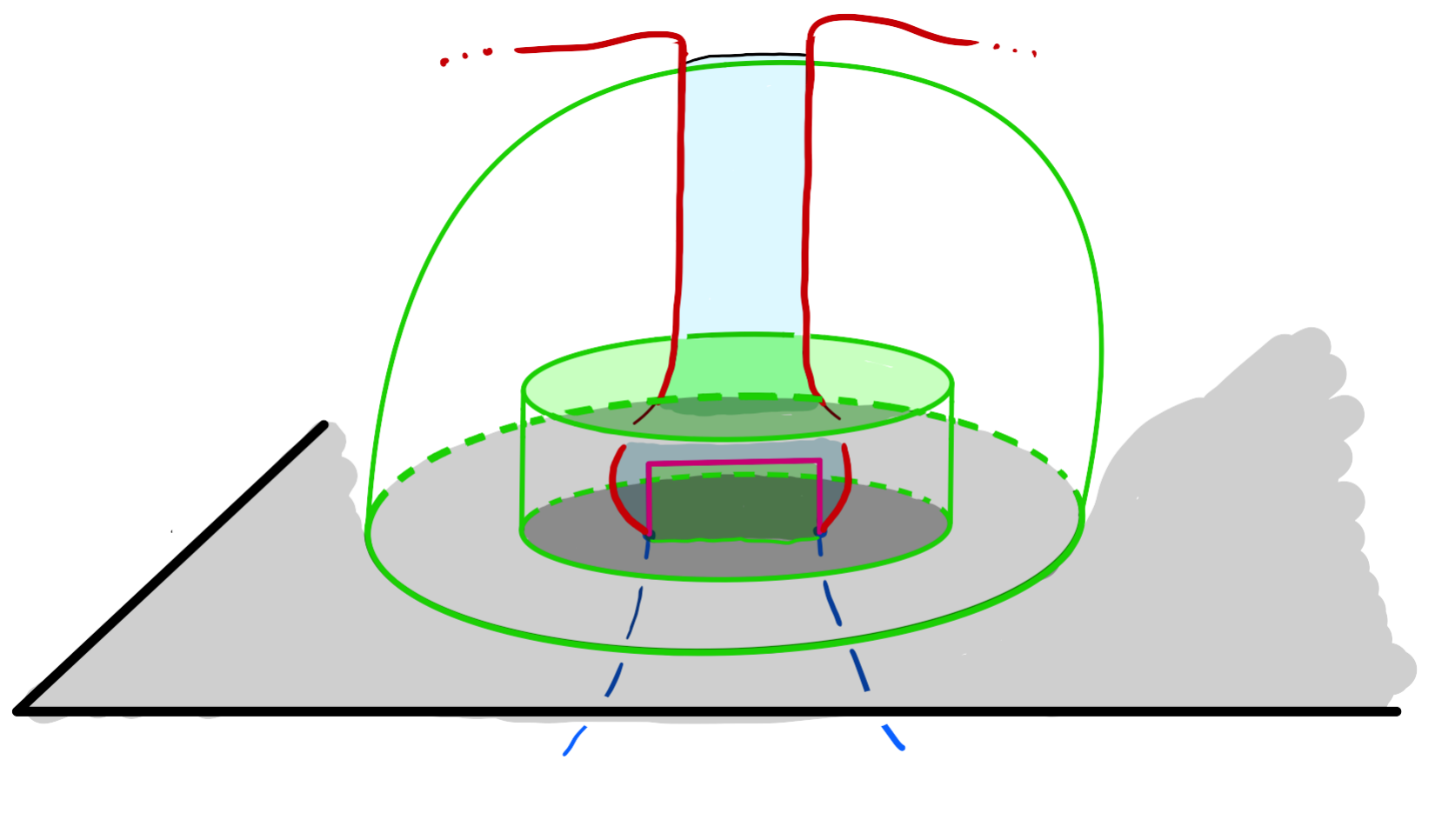}
\caption{ The knot $K(\phi'_2)$ can be obtained by banding the  knot $K(\phi_1)$ to a knot isotopic in $W$ to $\phi_1 \cup \phi'_2$. The band $\Phi$ is shown in blue. The green discs are $\Delta \cap W$. See the proof of Claim 3 in Case \ref{nonTUDA}.}
    \label{bandfig}
\end{figure}

Let $P$ be the sphere that is the frontier of a regular neighborhood of $\Delta$. It intersects the band $\Phi$ in $2(|(\Delta \cap U)\setminus \Delta_U| + 1$ arcs, two for each disc of Type (3) in $\Delta$ and one from the frontier of $\Delta_U$. Thus, the band $\Phi$ joins components of a split link and so $K_{23} = \phi'_2 \cup \psi$ is a band sum. A well-known theorem of Scharlemann \cite{Scharlemann-bandsum} says that the band sum is a connect sum. In particular, by the minimality of $\Delta$, $\Delta \cap S_U = \epsilon_U$. In particular, $\psi$ is $\boundary$-parallel in $X(U)$, contradicting Claim 2.
\qed (Claim 3)

\textbf{Claim 4:} $\Delta \cap A_1$ and $\Delta \cap Q$ consist of spanning arcs in $A_1$ and $Q$, respectively.

Since $\Delta \cap U$ contains a meridional disc and each meridional disc of $U$ intersects $A_1$, $|p| \geq 1$ times, $A_1$ contains at least $|p|$ spanning arcs. Thus, $Q$ contains at least one arc of intersection. The minimality of $c(\Delta)$ implies there are no circles of intersection between $\Delta$ and either of $Q$ or $A_1$. If there are inessential arcs of intersection in $Q$ or $A_1$, an isotopy would eliminate them and we would again contradict the minimality of $c(\Delta)$. \qed(Claim 4)

\textbf{Claim 5:} $\Delta \cap X(U\cup V)$ is incompressible in $X(U \cup V)$

This is the same as the first part of the proof of Claim 1.

\textbf{Claim 6:} $\Delta \cap X(U \cup V)$ consists of meridional discs of $X(U \cup V)$

The only incompressible orientable connected surfaces in the solid torus $X(U\cup V)$ are meridional discs, and $\boundary$-parallel discs and annuli. Since no component of $\Delta \cap A_2$ or $\Delta \cap Q$ is an inessential circle or inessential arc, no component of $\Delta \cap X(U \cup V)$ is a $\boundary$-parallel disc. Suppose some component $\Delta' \subset \Delta \cap X(U \cup V)$ is an outermost $\boundary$-parallel annulus in $X(U \cup V)$. By the minimality of $c(\Delta)$, it cannot be parallel to $Q$. Thus $\Delta'$ is parallel to an annulus in $\boundary X(U \cup V)$ that intersects $A_2$ and $Q$ in alternating rectangles. However, that implies that $\Delta \cap X(U)$ admits a $\boundary$-compressing disc, contradicting Claim 1. \qed (Claim 5)

We can now establish the contradiction, completing Case \ref{nonTUDA}. Let $\tau_m \co \boundary X(U \cup V) \to \boundary X(U\cup V)$ be the homeomorphism that is an $m$-fold twist along $Q$. Since $\Delta \cap A_2$ and $\Delta \cap Q$ both consist of spanning arcs, there exists $m \in \Z$ such that some component of $\tau_{-m}(\Delta \cap \boundary X(U \cup V))$ contains a curve that is a meridian of the solid torus $U \cup V$. Thus, since each component of $\boundary Q$ has slope $q/p$ with respect to $U \cup V$, each component of $\Delta \cap \boundary X(U \cup V)$ has slope $(mq + 1)/mp$. These simple closed curves must all be preferred longitudes of $U$ (i.e. meridians of $X(U \cup V)$), so we must have $mq + 1 = 0$. But this implies that $q = \pm 1$, a contradiction. \qed

\section{Sutured Manifold Theory}\label{sec: SMT}

In this section, independently from Section \ref{sec specific}, we use results from sutured manifold theory to prove:
\begin{proposition}\label{limited prop}
Suppose that $\Gamma \subset S^3$ is an Brunnian handcuff curve or $\theta$-curve. Suppose $X(\Gamma)$ contains an essential annulus, then one of the following occurs:
\begin{enumerate}
\item $\Gamma $ is a handcuff curve and a meridian of the separating edge is a guardrail for  an essential annulus $Q$.
\item $\Gamma $ is a toroidal $\theta$-curve
\item \label{limited possibilities} $\Gamma$ is an atoroidal $\theta$ curve and one of the following happens:
\begin{enumerate}
    \item $Q$ is of Type 1 with each component of $\boundary Q$ a meridian of $\Gamma$; or
    \item $Q$ is of Type 3-2 with guardrail a meridian of $\Gamma$
\end{enumerate}
\end{enumerate}
\end{proposition}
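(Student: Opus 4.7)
The plan is to combine the Koda-Ozawa annulus classification (Lemma \ref{exists 123} and Corollary \ref{Guardrails exist}) with the sutured manifold theory of Taylor \cites{T1,T2}. If $\Gamma$ is a toroidal $\theta$-curve, conclusion (2) is immediate, so I would assume $\Gamma$ is atoroidal. Then Lemma \ref{bd irred} gives that $X(\Gamma)$ is $\boundary$-irreducible, so Lemma \ref{exists 123} applies and produces, via Corollary \ref{Guardrails exist}, an essential annulus $Q \subset X(V)$ of Type 1, 2, 3-2, or 3-3 together with a guardrail disc $A \subset V = \nbhd(\Gamma)$.

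The key dichotomy is whether $|A \cap \Gamma| = 1$, in which case $A$ is a meridian of an edge of $\Gamma$ (an \emph{edge meridian}), or $|A \cap \Gamma| \geq 2$, in which case $A$ is only a meridional disc of $V$. The combinatorial fact that for a $\theta$-curve every edge meridian is non-separating in $\boundary V$, while for a handcuff curve the meridian of the separating edge is the unique edge meridian separating in $\boundary V$, controls the case analysis. In particular, a Type 3-3 annulus (whose guardrail is separating) for a handcuff curve can realise the separating edge meridian as its guardrail, yielding conclusion (1), while for a $\theta$-curve a Type 3-3 guardrail must satisfy $|A \cap \Gamma| \geq 3$ and has to be eliminated.

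For the atoroidal $\theta$-curve case I would then work type by type. For Types 3-3 and 2, and for Types 1 and 3-2 when the guardrail is not an edge meridian, the guardrail or a boundary of $Q$ is a meridional disc of $V$ but not of $\Gamma$; here I would apply Taylor's sutured manifold theory to the exterior of the appropriate constituent knot or link to derive a contradiction with the Brunnian hypothesis. For Type 2 with an edge meridian guardrail, capping off the meridional boundary of $Q$ by the edge disc gives a disc in the exterior of a constituent unknot of $\Gamma$ meeting the remaining edge once, which Lemma \ref{basic fact} forbids. For Type 1 with an edge meridian guardrail, I would upgrade the second boundary component of $Q$ from a meridian of $V$ to a meridian of $\Gamma$ via Taylor's results, reaching conclusion (3)(a); Type 3-2 with an edge meridian guardrail is precisely conclusion (3)(b). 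The handcuff case proceeds in parallel: either a Type 3-3 annulus already realises the separating edge meridian as a guardrail, or one uses sutured manifold theory to replace the annulus at hand by one whose guardrail is that meridian.

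The main obstacle lies in the sutured-manifold step: given a guardrail $A$ with $|A \cap \Gamma| \geq 2$, one must choose a proper subgraph $\Gamma' \subset \Gamma$ so that the pieces of $A$ and $Q$ inside $X(\Gamma')$ form a surface to which Taylor's theorems apply, and then show the conclusion contradicts either the Brunnian property or one of the simpler cases above. Matching each Koda-Ozawa type to the correct sutured-manifold input, and carrying out the parallel analysis for handcuff curves, is where the bulk of the work will lie.
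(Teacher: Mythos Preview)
Your plan diverges from the paper's proof at the crucial sutured-manifold step, and the approach you sketch there has a real gap.

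The paper does \emph{not} pass to the exterior of a proper subgraph $\Gamma' \subset \Gamma$ and try to assemble pieces of $A$ and $Q$ into a surface there. Instead it applies Theorem~\ref{graphthm} directly to the annulus $Q$ inside $X(\Gamma)$, using the standard sutures $\gamma$ (the edge meridians of $\Gamma$). After checking that $Q$ has no $b$-boundary compressing disc for any suture $b$, Theorem~\ref{graphthm} yields, for every component $b$ of $\gamma$, the purely numerical inequality
\[
|\partial Q \cap (\gamma \setminus b)| \ \geq\ |\partial Q \cap b|.
\]
For a handcuff curve $\gamma$ is a single curve, so this immediately forces $\partial Q \cap \gamma = \varnothing$ and the separating-edge meridian is a guardrail; no Koda--Ozawa case analysis is needed. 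For an atoroidal $\theta$-curve, the paper then takes a guardrail $a = \partial A$ from Corollary~\ref{Guardrails exist} and does a short combinatorial analysis on the genus-two surface $\partial V$: since $\partial Q$ is disjoint from $a$, the arcs of $\partial Q \cap R_\pm$ are constrained, and feeding this into the inequalities above (for each $b_i$ in turn) forces either every component of $\partial Q$ to be parallel to some $b_i$ (Conclusion~(3a)) or $\partial Q$ to be disjoint from one $b_i$ with the arcs joining the other two (Conclusion~(3b)). The guardrail $A$ is used only through its boundary curve $a$, and the dichotomy $|A\cap\Gamma|=1$ versus $\geq 2$ that you build your argument around never enters.

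Your proposed route---cutting $A$ and $Q$ into pieces in $X(\Gamma')$ for a constituent knot or link $\Gamma'$---runs into the problem that Theorem~\ref{SMT} (and hence Theorem~\ref{graphthm}) requires a genus-two boundary component, whereas $X(\Gamma')$ has only torus boundary. You would need a genuinely different sutured-manifold input, and you have not identified one; this is exactly the ``main obstacle'' you flag, and it is not clear it can be overcome along the lines you suggest. The paper's insight is that one should stay in $X(\Gamma)$, keep the genus-two boundary, and let the inequality do the work of promoting an arbitrary guardrail to an edge-meridian guardrail.
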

We then combine this result with those of Section \ref{sec specific} to complete the proof of Theorem \ref{Main theorem}. 

Sutured manifold theory, originally developed by David Gabai \cite{G1, G2, G3}, contains a set of powerful techniques that are useful for studying the euler characteristics of essential surfaces in Haken 3-manifolds. We do not need to revisit the technical machinery of the theory, but need only two results concerning the addition of a 2-handle to a suture. Here is what we need. The terminology is derived from \cites{G1, Scharlemann1, Scharlemann2, T1, T2}. 

Let $N$ be a compact orientable 3-manifold. In our setting, this 3-manifold will be the exterior of a genus two handlebody knot, two-component link, or knot in $S^3$. In such cases $H_2(N, \boundary N;\Z)$ is either $\Z^2$ (in the first two cases) or $\Z$ (in the last case). If $\boundary N$ is the union of two tori, we say that a class $\xi \in H_2(N, \boundary N)$ is \defn{Seifert-like} if the projection of $\boundary \xi$ to the first homology of each component of $\boundary N$ is nontrivial. 

The \defn{Thurston norm} $\chi_-(S)$ of a compact, connected, oriented surface $S$ is 0 if it is sphere or disc and $-\chi(S)$ otherwise. The Thurston norm of a disconnected compact, oriented surface is the sum of the Thurston norms of its components \cite{Thurston}. The surface $S$ is \defn{norm-minimizing} if it is properly embedded in $N$ and, out of all oriented properly embedded surfaces $T \subset N$ with $\boundary T = \boundary S$ and $[S, \boundary S] = [T, \boundary T]$ in the homology group\footnote{All homology groups have integer coefficients.} $H_2(N,\boundary S)$ we have $\chi_-(S) \leq \chi_-(T)$.  The surface $S$ is \defn{taut} (in $N$) if it is incompressible, Thurston norm minimizing, and not an inessential sphere or disc.

Suppose that $\gamma \subset N$ is a disjoint collection of oriented curves, that $A(\gamma)$ is a fixed regular neighborhood of $\gamma$ in $\boundary N$ and that $T(\gamma)$ is the union of some torus components of $\boundary N$ each of which is disjoint from $\gamma$. We orient $\boundary A(\gamma)$ using the orientation of $\gamma$. The curves $\gamma$ are required to have the property that the components of $\boundary N \setminus (A(\gamma) \cup T(\gamma))$ are partitioned into subsurfaces $R_-(\gamma)$ and $R_+(\gamma)$ such that $R_-(\gamma)$ is given a consistent inward pointing orientation and $R_+(\gamma)$ is given a consistent outward pointing orientation. Furthermore the orientations of $\boundary R_\pm(\gamma)$ induced by the orientations of $R_\pm(\gamma)$ match those induced by $\gamma$. We denote all of this data with the concise notation $(N,\gamma)$ and call $(N,\gamma)$ a \defn{sutured manifold}. The sutured manifold $(N,\gamma)$ is \defn{taut} if $N$ is irreducible and the surfaces $R_-(\gamma)$, $R_+(\gamma)$, and $T(\gamma)$ are all taut. The curves $\gamma$ and the tori $T(\gamma)$ are called the \defn{sutures} of $(N,\gamma)$. In this paper, $\gamma$ will be curves of one of the forms shown in Figure \ref{fig:std sutures}.

\begin{figure}[ht!]
\centering
\includegraphics[scale=0.5]{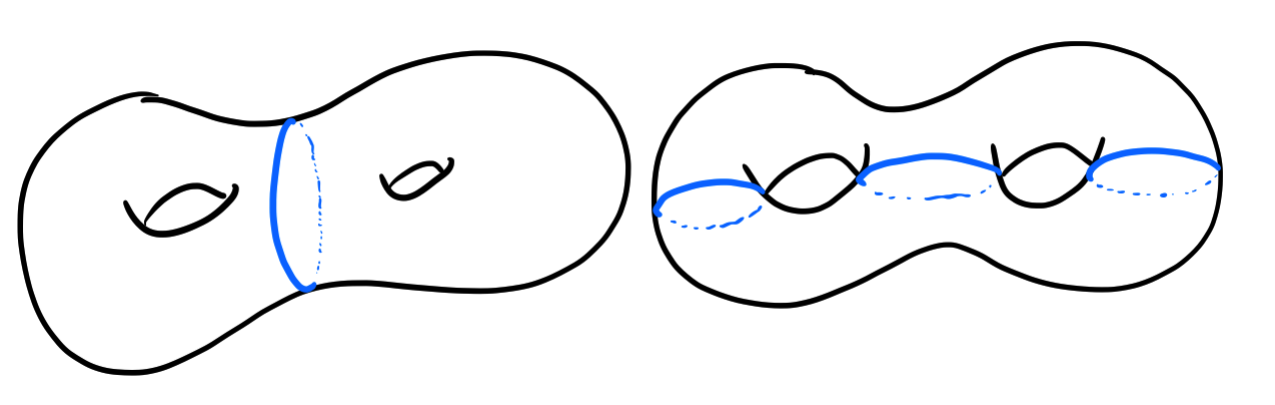}
\caption{The two options for standard sutures on a genus two boundary component}
\label{fig:std sutures}
\end{figure}

 We will be attaching a 2-handle to a suture. Observe that if $(N,\gamma)$ is a sutured manifold and if $b$ is a component of $\gamma$, then by choosing the 2-handle addition so that the attaching region of the 2-handle coincides with the neighborhood of $b$ in $A(\gamma)$, we have that $(N[b], \gamma \setminus b)$ is a sutured manifold.

Given a properly embedded oriented surface $S \subset N$, with $\boundary S$ transverse to $\gamma$, there is a natural choice of sutures $\gamma'$ for the sutured manifold $N\setminus S$ obtained by removing a regular open neighborhood of $S$ from $N$. The curves $\gamma'$ are isotopic to the double-curve sum of $\gamma$ with $\boundary S$. We say that $(N,\gamma) \stackrel{S}{\to} (N\setminus S, \gamma')$ is a \defn{sutured manifold decomposition} with \defn{decomposing surface} $S$. If $(N,\gamma)$ and $(N\setminus S, \gamma')$ are both taut, we say that the sutured manifold decomposition $(N,\gamma) \stackrel{S}{\to} (N\setminus S, \gamma')$ is \defn{taut}. Observe that if the decomposition is taut then so is $S$, though the converse may not hold. A \defn{sutured manifold hierarchy} is a sequence $\mc{H}$ of sutured manifold decompositions terminating in a sutured manifold $(N_n, \gamma_n)$ with $H_2(N_n,\boundary N_n) = 0$. Associated to $\mc{H}$ is a branched surface $B(\mc{H})$ obtained from the union of the decomposing surfaces in the hierarchy by smoothing according to orientations of the surfaces.

Taylor proved the following in \cite{T1}, the statement here follows that given in \cite[Theorem 2.1]{T2}. In the statement, we let $\beta \subset N[b]$ be the properly embedded arc that is the cocore of a 2-handle attached along the curve $b$.

\begin{theorem}\label{SMT}
Suppose that $(N,\gamma)$ is a taut sutured manifold with a genus 2 boundary component $F$. Suppose that $\gamma \cap F$ consists of either one separating or three non-separating curves and that $b$ is a component of $\gamma \cap F$. Let $Q \subset N$ be a properly embedded surface having no component a sphere or disc disjoint from $\gamma$. Assume also that $\boundary Q$ intersects $\gamma$ minimally up to isotopy and that $|\boundary Q \cap b| \geq 1$. Then one of the following occurs:
\begin{enumerate}
\item $Q$ has a compressing or $b$-boundary compressing disc
\item The pair $(N[b], \beta)$ has a proper summand $(M'_1, \beta'_1)$ where $M'_1$ is a lens space (other than $S^3$ or $S^1 \times S^2$) and $\beta'_1$ is the core of a solid torus in a genus one Heegaard splitting of $M'_1$.
\item The sutured manifold $(N[b], \gamma \setminus b)$ is taut. The arc $\beta$ can be properly isotoped to be disjoint from the first decomposing surface $\wihat{S}$ in some taut sutured manifold hierarchy $\mc{H}$ of $(N[b], \gamma \setminus b)$. If $|\gamma \cap F| = 3$, the surface $\wihat{S}$ can be taken to represent $\pm y$ for any given nonzero class in $H_2(N[b], \boundary N[b])$. If $|\gamma \cap F| = 1$, the surface $\wihat{S}$ can be taken to represent any Seifert-like homology class in $H_2(N[b], \boundary N[b])$.
\item $-2\chi(Q)+ |\boundary Q \cap \gamma| \geq 2|\boundary Q \cap b|$.
\end{enumerate}
\end{theorem}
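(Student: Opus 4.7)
The plan is to build on Gabai's sutured manifold hierarchy technology combined with Scharlemann-style handle addition arguments. The four conclusions split naturally into a compressibility alternative (1), an exceptional lens space alternative (2), a positive outcome in which a taut hierarchy survives (3), and a counting alternative (4). The task is to show that at least one must occur.

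First, I would normalize $Q$ by double-curve summing with parallel copies of $R_\pm(\gamma)$, producing a homologous oriented surface $S$ whose boundary meets $\gamma$ only along intersections with $b$. This is a standard move in sutured manifold theory (and one for which the effect on the quantity $-2\chi + |\boundary(\cdot) \cap \gamma|$ is controlled), so establishing the theorem for $S$ will imply it for $Q$. The point of the reduction is to make $S$ into a viable candidate first decomposing surface: its boundary now crosses $\gamma$ only along $b$, so after attaching the $2$-handle along $b$ we can hope to see $S$ (or a norm-minimizing representative of its class) as a decomposing surface for $(N[b], \gamma \setminus b)$.

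Next, I would invoke Gabai's existence theorem for conditioned taut sutured manifold hierarchies, choosing the first decomposing surface $\wihat{S}$ to represent either the class of $S$ or the prescribed homology class called for in (3). The central step is then to attach the $2$-handle along $b$ and track whether the hierarchy descends to a taut hierarchy of $(N[b], \gamma \setminus b)$ with $\beta$ disjoint from $\wihat{S}$. Three obstructions may arise: the decomposing surface can become compressible in $N[b]$ (which via a handle-addition/boundary-compression analysis yields a compressing or $b$-boundary compressing disc for $Q$, i.e.\ (1)); the handle attachment can produce a nontrivial reducing sphere, which via a Scharlemann-type decomposition argument on $(N[b], \beta)$ yields the lens space summand of (2); or the cocore $\beta$ can be forced to meet $\wihat{S}$ essentially. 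In the last case, one counts: each essential intersection point of $\beta$ with a decomposing surface records an arc of $\boundary S \cap b$, and the Thurston norm bound on $\wihat{S}$ relative to the sutures translates into the inequality $-2\chi(Q) + |\boundary Q \cap \gamma| \geq 2 |\boundary Q \cap b|$ of (4).

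The principal obstacle will be controlling the boundary behavior of $\wihat{S}$ after $2$-handle attachment, so that the first decomposing surface can be forced to represent the prescribed homology class. When $|\gamma \cap F| = 3$, the boundary of any decomposing surface is tightly constrained by the sutures, and any nonzero class in $H_2(N[b], \boundary N[b])$ can be realized; but when $|\gamma \cap F| = 1$, the $2$-handle attachment turns $F$ into a torus, and only Seifert-like classes have boundary slopes compatible with the remaining sutures and with tautness of the decomposition. Separating the lens space alternative (2) from the counting alternative (4) — that is, distinguishing genuine failures of tautness from mere forced intersections of $\beta$ with the hierarchy — is where the delicate case analysis from the Gabai--Scharlemann tradition will be required.
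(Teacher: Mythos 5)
First, a point of comparison: the paper does not prove Theorem \ref{SMT} at all. It is imported from Taylor's earlier work (the band-taut sutured manifold paper \cite{T1}, in the form restated as Theorem 2.1 of \cite{T2}), so there is no in-paper argument for your proposal to be measured against; the relevant proof occupies most of \cite{T1}.

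Judged on its own terms, your outline names the right circle of ideas (Gabai's hierarchy existence theorems, Scharlemann's $2$-handle addition combinatorics), but it misassigns the roles of the surfaces, and this is a genuine gap rather than a presentational one. In the actual argument $Q$ is a \emph{parameterizing surface}: it is carried along a hierarchy that is constructed independently of $Q$, and the inequality in conclusion (4) is the statement that its index $-2\chi(Q)+|\partial Q\cap\gamma|-2|\partial Q\cap b|$ cannot be negative unless $Q$ admits a compressing or $b$-boundary compressing disc, a bound extracted from the terminal stage of the hierarchy. Your plan instead tries to promote (a modification of) $Q$ to the first decomposing surface $\widehat{S}$. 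That cannot work: conclusion (3) requires $\widehat{S}$ to represent an \emph{arbitrary} prescribed nonzero (resp.\ Seifert-like) class in $H_2(N[b],\partial N[b])$, which has nothing to do with the class of $Q$; and the proposed normalization (double-curve summing $Q$ with copies of $R_\pm(\gamma)$) neither arranges that $\partial Q$ meets $\gamma$ only along $b$ nor preserves both sides of inequality (4) in the way you assert --- and it is unavailable if $Q$ is not oriented, which the hypotheses do not require. Finally, the heart of the theorem --- the mechanism by which the cocore arc $\beta$ is controlled so that either $(N[b],\gamma\setminus b)$ is taut with $\beta$ isotoped off $\widehat{S}$, or a lens space summand of $(N[b],\beta)$ appears, or the index bound on $Q$ holds --- is precisely the band-taut sutured manifold machinery of \cite{T1}, and your sketch defers all of it to ``delicate case analysis.'' As written, the proposal is a plausible research plan rather than a proof.
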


Given a handcuff curve or $\theta$-curve $\Gamma \subset S^3$, we can create a sutured manifold as follows. Set $N = X(\Gamma)$. If $\Gamma$ is a handcuff curve, let $\gamma$ be a meridian of the separating edge of $\Gamma$. Orient $\gamma$, choose a regular neighborhood $A(\gamma)$, and orient the components (each a connected genus one surface with a single boundary component) of $\boundary N \setminus A(\gamma)$ to create $R_-(\gamma)$ and $R_+(\gamma)$. If $\Gamma$ is a $\theta$-curve, let $\gamma \subset \boundary N$ be the union of three simple closed curves, each a meridian of one of the edges of $\Gamma$. Choose a regular neighborhood $A(\gamma)$ of $\gamma$. Each component of $\boundary N \setminus A(\gamma)$ is then a pair of pants (a connected genus zero surface with three boundary components). Given one an inward orientation and call it $R_-(\gamma)$ and the other an outward orientation and call it $R_+(\gamma)$. These choices induce an orientation of $\gamma$ turning $(N,\gamma)$ into a sutured manifold. We call this the \defn{standard sutured manifold} corresponding to the graph $\Gamma$. See Figure \ref{fig:std sutures}.

\begin{lemma}\label{std taut}
Suppose that $\Gamma  \subset S^3$ is a Brunnian handcuff curve or $\theta$-curve, then the standard sutured manifold corresponding to $\Gamma$ is taut.
\end{lemma}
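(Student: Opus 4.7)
My plan is to verify directly the three ingredients of tautness: $N = X(\Gamma)$ is irreducible, the torus suture collection $T(\gamma) = \nil$ is vacuously taut, and both $R_-(\gamma)$ and $R_+(\gamma)$ are taut surfaces in $N$. Irreducibility is immediate from connectedness of $\Gamma$: any sphere in $X(\Gamma)$ separates $S^3$ into two balls, one of which must be disjoint from $\Gamma$, and so provides a ball bounding the sphere in $X(\Gamma)$. The remaining task is to check that $R_\pm(\gamma)$ is both incompressible in $X(\Gamma)$ and Thurston norm-minimizing, and the main input for both will be the $\boundary$-irreducibility of $X(\Gamma)$ given by Lemma \ref{bd irred}.

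For incompressibility, I plan to show that any compressing disc $D$ for $R_+(\gamma)$ is in fact a compressing disc for $\boundary X(\Gamma)$, contradicting Lemma \ref{bd irred}. In the $\theta$-curve case $R_+(\gamma)$ is a pair of pants, so the essential curve $\boundary D$ must be parallel to a boundary meridian, which is essential on the genus two surface $\boundary X(\Gamma)$. In the handcuff case $R_+(\gamma)$ is a once-punctured torus forming half of $\boundary X(\Gamma)$, split off by the separating meridian $\boundary R_+$; an essential simple closed curve on $R_+(\gamma)$ remains essential on $\boundary X(\Gamma)$, since any null-homotopy disc in $\boundary X(\Gamma)$ bounded by it either lies inside $R_+$ (contradicting essentiality on $R_+$) or lies on the $R_-$ side and contains $\boundary R_+$ in its interior (making the separating meridian null-homotopic).

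For Thurston norm-minimizing, I note that $\chi_-(R_\pm(\gamma)) = 1$ in both cases. Suppose for contradiction that some properly embedded oriented $T \subset X(\Gamma)$ with $\boundary T = \boundary R_\pm(\gamma)$ and $[T, \boundary T] = [R_\pm, \boundary R_\pm]$ has $\chi_-(T) = 0$. Each component of $T$ is then a disc, sphere, annulus, or torus; counting boundary circles gives $2a + d = 3$ in the $\theta$-curve case or $2a + d = 1$ in the handcuff case, where $a$ and $d$ count annulus and disc components of $T$. Either equation forces at least one disc component, whose boundary is a meridian of $\Gamma$ and so bounds a disc in $X(\Gamma)$, contradicting Lemma \ref{bd irred}. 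The proof is then essentially bookkeeping once $\boundary$-irreducibility is in hand; the mildest subtlety is the handcuff-case incompressibility argument, where one must verify that essential curves on the once-punctured torus half of $\boundary X(\Gamma)$ remain essential in the full genus two surface.
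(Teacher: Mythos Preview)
Your proof is correct and follows essentially the same route as the paper's: both reduce tautness of $R_\pm(\gamma)$ to the observation that a homologous surface of smaller Thurston norm would necessarily contain a disc component with boundary a meridian of $\Gamma$, yielding a contradiction. The only cosmetic differences are that the paper derives incompressibility as a consequence of norm-minimality rather than checking it separately, and phrases the final contradiction in terms of $\Gamma$ being reducible rather than invoking Lemma~\ref{bd irred} directly; your explicit appeal to Lemma~\ref{bd irred} is arguably cleaner.
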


\begin{proof}
Let $(N,\gamma)$ be the standard sutured manifold corresponding to $\Gamma$. Observe that each of $R_-(\gamma)$ and $R_+(\gamma)$ is a connected surface of Euler characteristic equal to $-1$. If one of them, call it $S$, is not Thurston norm minimizing, then there would be homologous surface $T$ with the same boundary, having strictly small Thurston norm. Such a surface must be the union of spheres, discs, annuli, and tori.  Since $\boundary T$ is the meridians of edges of $\Gamma \subset S^3$, it must be the case that at least one component of $T$ is a disc. This implies $\Gamma$ is a reducible handcuff curve and, therefore, as observed previously not Brunnian. Thus, if $\Gamma$ is Brunnian then $R_-(\gamma)$ and $R_+(\gamma)$ are Thurston norm-minimizing. This also implies that they are incompressible. Since every tame sphere in $S^3$ bounds a ball to both sides, $N$ is irreducible. Thus, $(N,\gamma)$ is taut.
\end{proof}

Suppose that $(N,\gamma)$ is the standard sutured manifold corresponding to a Brunnian handcuff curve or $\theta$-curve $\Gamma \subset S^3$. Suppose $b \subset \gamma$ is a suture. Since $b$ is a meridian of an edge $e$ of $\Gamma$, attaching a 2-handle to $\boundary N$ along $b$ corresponds to removing the edge $e$ from $\Gamma$.  Thus, $N[b]$ is either the exterior of the unlink of two components (if $\Gamma$ is a Brunnian handcuff curve) or the exterior of the unknot (if $\Gamma$ is a Brunnian $\theta$-curve). 

We thus have the following adaptation of Theorem \ref{SMT}.

\begin{theorem}\label{graphthm}
Suppose that $\Gamma  \subset S^3$ is a Brunnian $\theta$-curve or handcuff curve. Let $(N,\gamma)$ be the standard sutured manifold corresponding to $\Gamma$. Let $b \subset \gamma$ be a suture and suppose that $Q \subset N$ is a properly embedded surface with $\boundary Q$ intersecting $\gamma$ minimally and with $|\boundary Q \cap b| \geq 1$. Suppose that $Q$ is incompressible and admits no $b$-boundary compressing disc. Then 
\[-2\chi(Q)+ |\boundary Q \cap \gamma| \geq 2|\boundary Q \cap b|.\]
\end{theorem}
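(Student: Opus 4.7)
The plan is to apply Theorem~\ref{SMT} to the standard sutured manifold $(N, \gamma)$ associated to $\Gamma$, which is taut by Lemma~\ref{std taut}. The genus-two boundary $F = \boundary N$ carries either one separating suture (in the handcuff case) or three non-separating sutures (in the $\theta$-curve case), matching the hypothesis on $\gamma \cap F$ in Theorem~\ref{SMT}. The hypotheses on $Q$---incompressibility, no $b$-boundary compressing disc, minimal intersection with $\gamma$, and $|\boundary Q \cap b| \geq 1$---are granted by assumption; I would additionally assume (by discarding inessential components) that $Q$ has no sphere or disc component disjoint from $\gamma$, since such components are excluded by $Q$'s incompressibility together with the tautness of $(N, \gamma)$. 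The strategy is then to rule out conclusions (1)--(3) of Theorem~\ref{SMT}, forcing conclusion (4), which is precisely the desired inequality.

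Conclusion (1) is directly excluded by the hypotheses on $Q$. For conclusion (2), I would use the Brunnian property: $N[b]$ is the exterior of either an unknot (a solid torus, in the $\theta$-curve case) or a two-component unlink (a genus-two handlebody, in the handcuff case, where $b$ is necessarily the unique suture, namely the meridian of the separating edge). Neither manifold admits a non-trivial prime decomposition in $S^3$, so in particular no proper lens space summand exists and (2) fails.

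The main obstacle is conclusion (3). In the handcuff case it is immediate: $\gamma \setminus b = \emptyset$, so the entire boundary of the handlebody $N[b]$ would have to form a taut $R_\pm$ in $(N[b], \emptyset)$, but the genus-two surface of a handlebody is compressible. In the $\theta$-curve case, $(N[b], \gamma \setminus b)$ is genuinely taut---the two remaining sutures are parallel longitudes of the solid torus $N[b]$ cobounding incompressible annuli---so the argument must instead show that the cocore $\beta$, naturally identified with the edge $e = \Gamma \setminus \sigma$ dual to $b$, cannot be properly isotoped in $N[b] = X(\sigma)$ to be disjoint from a meridian disc of this solid torus. Such an isotopy would place $e$ entirely on one side of a Seifert disc for $\sigma$, and combining this with Lemma~\ref{basic fact} and the unknottedness of the cycles $K_{13}, K_{23}$ should force $\Gamma$ to be trivial, contradicting its Brunnian (and hence non-trivial) nature. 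This step, in which the Brunnian hypothesis is genuinely brought to bear, is the main obstacle I expect; once it is resolved, Theorem~\ref{SMT} yields the desired inequality $-2\chi(Q) + |\boundary Q \cap \gamma| \geq 2 |\boundary Q \cap b|$.
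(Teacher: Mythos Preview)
Your overall strategy matches the paper's: apply Theorem~\ref{SMT} and eliminate conclusions (1)--(3). Your handling of (1) and (2) is fine. In the handcuff subcase of (3), your conclusion is right but the reasoning is garbled: $N[b]$ is \emph{not} a genus-two handlebody. Since $b$ is separating, $\boundary N[b]$ consists of two tori, and $N[b]$ is the exterior of the two-component unlink, i.e.\ the connected sum of two solid tori. The paper simply observes that this manifold is reducible, so $(N[b],\varnothing)$ is not taut.

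The substantive gap is in the $\theta$-curve subcase of (3). You correctly isolate the crux: if (3) held, the cocore $\beta$ (the edge $e$ dual to $b$) could be properly isotoped in the solid torus $N[b]=X(\sigma)$ to miss the first decomposing surface $\wihat S$. But your proposed route to a contradiction---Lemma~\ref{basic fact} together with the unknottedness of $K_{13},K_{23}$---is misdirected: Lemma~\ref{basic fact} concerns discs meeting $e$ in \emph{one} point, not zero, and the constituent-knot argument is left entirely vague. The paper's argument is both simpler and uses a step you omit. First, it exercises the freedom in conclusion (3) to choose the homology class $y$ to be that of a meridian disc of the solid torus; since that class has Thurston norm zero and $N[b]$ is a solid torus, the taut representative $\wihat S$ is itself a meridian disc. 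Second, if $\beta$ can be isotoped off $\wihat S$, then $\wihat S$ lies in $N[b]\setminus\nbhd(\beta)=N=X(\Gamma)$ as an essential disc, so $X(\Gamma)$ is $\boundary$-reducible---contradicting Lemma~\ref{bd irred} (Ozawa--Tsutsumi), which is precisely where the Brunnian hypothesis enters. That is the missing idea; once you replace your sketch with this observation, the proof is complete.
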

\begin{proof}
By Lemma \ref{std taut}, $(N,\gamma)$ is taut. Let $e$ be the edge of $T$ that has $b$ as a meridian. The arc $\beta$ is equal to the edge $e$ minus a regular neighborhood of its endpoints.  We apply Theorem \ref{SMT}. Since $S^3$ has no lens space summands, we need only consider the possibility that:
\begin{itemize}
\item The sutured manifold $(N[b], \gamma \setminus b)$ is taut. The arc $\beta$ can be properly isotoped to be disjoint from the first decomposing surface $\wihat{S}$ in some taut sutured manifold hierarchy $\mc{H}$ of $(N[b], \gamma \setminus b)$. If $|\gamma \cap \boundary N| = 3$, the surface $\wihat{S}$ can be taken to represent $\pm y$ for any given nonzero class in $H_2(N[b], \boundary N[b])$. If $|\gamma \cap \boundary N| = 1$, the surface $\wihat{S}$ can be taken to represent any Seifert-like homology class in $H_2(N[b], \boundary N[b])$.
\end{itemize}

If $\Gamma $ is a handcuff curve, then $\Gamma \setminus e$ is a split link, and we see that $N[b]$ is reducible. Consequently, $(N[b], \nil)$ is not taut. Thus, we need only consider the situation when $\Gamma $ is a $\theta$-curve; i.e. $|\gamma \cap \boundary N| = 3$. We take $y \in H_2(N[b], \boundary N[b])$ to be the class represented by an oriented disc $\wihat{S} = S \cap N[b]$ where $S$ is a disc in $S^3$ whose boundary is the cycle $\gamma\setminus e$. Without loss of generality, we assume that $\wihat{S}$ has been isotoped to minimize $|\boundary S \cap \gamma|$. Since $\chi_-(\wihat{S}) = 0$, every norm minimizing surface in the class $[\wihat{S},\boundary \wihat{S}]$ has Thurston norm equal to 0. Since $N[b]$ is a solid torus, the first surface in the hierarchy $\mc{H}$ is a disc, isotopic relative to its boundary, to $\wihat{S}$, possibly after reversing orientation. But this implies that $N$ is $\boundary$-reducible, contradicting Lemma \ref{bd irred}.
\end{proof}

\begin{proof}[Proof of Proposition \ref{limited prop}]
Let $\Gamma$ be a Brunnian $\theta$-curve or handcuff curve such that $X(\Gamma )$ admits an essential annulus. Let $(N,\gamma)$ be a standard sutured manifold structure on $X(\Gamma )$. If $\Gamma $ is a handcuff curve, let $b = \gamma$. If $\Gamma $ is a $\theta$-curve, let $b_1, b_2, b_3$ be the components of $\gamma$. Let $Q$ be any essential annulus in $X(\Gamma )$ which, up to isotopy, intersects $\gamma$ minimally.

We start by showing that $Q$ has no $b$-$\boundary$-compressing disc, for any component $b$ of $\gamma$. Suppose that $D$ is such a disc. Boundary compress $Q$ using $D$, to obtain $Q'$, which is either a disc or the union of a disc and an annulus. Since, (by Lemma \ref{bd irred}) $X(\Gamma)$ is $\boundary$-irreducible and irreducible, the disc component is $\boundary$-parallel. As $Q$ is essential, this means that $Q'$ is the union of a disc and an annulus and, furthermore, $\boundary Q$ can be isotoped to intersect $\gamma$ in fewer points. This contradicts our choice of $Q$. 

By Theorem \ref{graphthm}, for any component $b \subset \gamma$
\begin{equation}\label{intersection ineq}
|\boundary Q \cap (\gamma \setminus b)| \geq |\boundary Q \cap b|.
\end{equation}

(Note that when $\boundary Q \cap b = \nil$, this statement is trivially true.) Consequently, if $\Gamma$ is a handcuff curve, $\boundary Q \cap b = \nil$. 

For the remainder, assume $\Gamma$ is a $\theta$-curve. By Corollary \ref{Guardrails exist}, we may assume that there exists a disc $A \subset \nbhd(T)$ that is a guardrail for $Q$; let $a = \boundary A$.

Each component $b_i$ of $\gamma$ bounds a disc $B_i$ in $\nbhd(T)$. We may assume that $B_i$ has been isotoped, relative to $\boundary B_i$, so that $B_i \cap A$ is a collection of arcs; by definition of $a$, this collection of arcs is minimal up to isotopy.  Any arc component of $(B_1 \cup B_2 \cup B_3) \cap A$ that is outermost in $A$ cuts of a subdisc of $A$ whose boundary is a properly embedded arc in $R_\pm$ with endpoints on the same component of $\gamma$. 

\textbf{Case 1:} $a \cap \gamma \neq \nil$. 

Without loss of generality, we may assume that $a \cap R_+$ contains an arc joining $b_1$ to itself. Each other arc of $a \cap R_+$ either joins $b_1$ to itself or joins $b_1$ to either $b_2$ or $b_3$. This implies $|a \cap b_1| > |a \cap (b_2 \cup b_3)|$. Hence, $a \cap R_-$ also contains an arc joining $b_1$ to itself.

Since $\boundary Q$ is disjoint from $a$, each arc of $\boundary Q \cap R_\pm$ (if any) joints $b_1$ to itself or joins $b_1$ to $b_2$ or $b_3$. If $\boundary Q \cap b_1 = \nil$, then each component of $\boundary Q \cap R_\pm$ is a circle parallel to $b_2$ or to $b_3$. This implies Conclusion (3a). 


Assume, therefore, that $\boundary Q \cap b_1 \neq \nil$. We will show that Conclusion (3b) holds. In this case, we use Theorem \ref{graphthm} to conclude:
\[
|\boundary Q \cap b_1| \geq |\boundary Q \cap (b_2 \cup b_3)|. 
\]
By Inequality \eqref{intersection ineq}, this would imply that $|\boundary Q \cap b_1| = |\boundary Q \cap (b_2 \cup b_3)|$. If such is the case, then $\boundary Q$ must intersect $b_j$ for some $j \in \{2,3\}$. Without loss of generality, assume that $j = 2$. Applying Inequality \eqref{intersection ineq} to $b_2$, we see that it must be the case that $|\boundary Q \cap b_2| = |\boundary Q \cap b_1|$. Consequently, $|\boundary Q \cap b_3| = 0$ and so $\boundary Q$ is disjoint from $b_3$. Thus, the arcs of $\boundary Q \cap R_\pm$ join $b_1$ to $b_2$ and are disjoint from $b_3$. Consequently, $\boundary Q$ intersects each of $b_1$ and $b_2$ always with the same sign and is disjoint from $b_3$. This is Conclusion (3b). 

\textbf{Case 2:} $a \cap \gamma = \nil$

Since each of $R_\pm$ is a pair-of-pants, this implies that $a$ is parallel to a component of $\gamma$. Without loss of generality, we may assume this component is $b_1$. Hence, either each component of $\boundary Q$ is isotopic to a component of $b_2 \cup b_3$, or $\boundary Q \cap R_\pm$ is a collection of arcs disjoint from $b_1$. For each choice of $\{j,k\} = \{2,3\}$, we may apply Inequality \ref{intersection ineq}, to see that $|\boundary Q \cap b_j| \geq |\boundary Q \cap b_k|$. Hence, $|\boundary Q \cap b_2| = |\boundary Q \cap b_3|$, and so each component of $\boundary Q \cap R_\pm$ is an arc joining $b_2$ to $b_3$. Hence, $\boundary Q$ intersects each of $b_2$ and $b_3$ always with the same sign and is disjoint from $b_1$. This is again Conclusion (3b).
\end{proof}

We can now assemble Proposition \ref{Step 1} and Proposition \ref{limited prop} to prove Theorem \ref{Main theorem}.

\begin{proof}[Proof of Theorem \ref{Main theorem}]
Suppose that $\Gamma$ is a Brunnian $\theta$-curve such that $X(\Gamma)$ is atoroidal. We will show that $X(\Gamma)$ is also anannular. Suppose, to the contrary, that $X(\Gamma)$ admits an essential annulus $Q$. By Lemma \ref{exists 123}, we may assume that $Q$ is of Type 1, 2, or 3. If it is of Type 3, then it is of type 3-2 or 3-3 by Lemma \ref{bd irred}. By Corollary \ref{Guardrails exist}, $Q$ admits a guardrail $A$. By Proposition \ref{limited prop}, $Q$ admits a guardrail which is a meridian of $\Gamma$. By Proposition \ref{Step 1} then implies that $X(\Gamma)$ contains an essential torus.
\end{proof}

\section{Uniqueness of Brunnian spines}\label{Spine Uniqueness}

How can we create multiple Brunnian spines for the same genus 2 handlebody knot? Here are two ways:
\begin{enumerate}
\item Start with a Brunnian handcuff curve and shrink the separating edge to obtain a Brunnian 2-bouquet.
\item Start with a Brunnian $\theta$-curve and shrink any one of the edges to obtain a Brunnian 2-bouquet.
\end{enumerate}

In \cite[Corollary 6.2]{T2}, Taylor derived the following as a consequence of Theorem \ref{SMT}:
\begin{theorem}\label{2h add}
Assume that $V$ is a genus 2 handlebody embedded in a compact, orientable 3-manifold $M$ such that $M$ contains no lens space summands, any pair of curves in $\boundary M$ that compress in $M$ are isotopic in $M$, and $N = X(V)$ is irreducible and $\boundary$-irreducible. If $a$ and $b$ are essential closed curves on $\boundary V$ bounding discs in $V$ that cannot be isotoped to be disjoint, then one of the following occurs:
\begin{enumerate}
\item One of $N[a]$ and $N[b]$ is irreducible and $\boundary$-irreducible.
\item There exists an essential annulus $A \subset N$ such that one component of $\boundary A$ lies on a component of $\boundary M$ and the other lies on $\boundary V$, is disjoint  from $a$ or $b$ and bounds a disc in $V$. 
\end{enumerate}
\end{theorem}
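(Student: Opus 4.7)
The plan is to prove Theorem~\ref{2h add} by contrapositive: assume that neither $N[a]$ nor $N[b]$ is irreducible and $\boundary$-irreducible, and produce the essential annulus promised by conclusion~(2). The main tool is Theorem~\ref{SMT}, applied to a carefully chosen sutured manifold structure on $N$ in which $b$ appears as a suture on $\boundary V$.

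First I would equip $N$ with a taut sutured manifold structure $(N,\gamma)$ so that $\gamma \cap \boundary V$ is one of the two standard patterns allowed by Theorem~\ref{SMT} and contains $b$ as a component: take $\gamma \cap \boundary V = b$ if $b$ is separating on $\boundary V$, or augment $b$ by two disjoint nonseparating meridians of $V$ if $b$ is nonseparating. Sutures on the other components of $\boundary M$ can be chosen using the hypothesis that all compressing curves on $\boundary M$ are mutually isotopic in $M$. A Thurston-norm argument in the spirit of Lemma~\ref{std taut}, combined with the irreducibility and $\boundary$-irreducibility of $N$, shows that $R_\pm(\gamma)$ are taut, so $(N,\gamma)$ is taut.

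Next, since $N[a]$ fails to be irreducible or $\boundary$-irreducible, there is a reducing sphere or compressing disc in $N[a]$. Puncturing this witness along the cocore of the 2-handle attached along $a$ yields a planar surface $Q \subset N$ whose boundary consists of parallel copies of $a$ on $\boundary V$ together with at most one curve on $\boundary M$. Isotope $Q$ to minimize $(|\boundary Q \cap \gamma|, |\boundary Q|)$ lexicographically. Under this choice $Q$ is incompressible in $N$, and $Q$ admits no $b$-boundary compressing disc, since such a disc together with the meridional disc of $V$ bounded by $a$ would guide an isotopy making $a$ and $b$ disjoint in $\boundary V$, contrary to hypothesis.

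Now apply Theorem~\ref{SMT} to $(N,\gamma)$ with this $b$ and $Q$. Conclusion~(1) is ruled out by the previous paragraph; conclusion~(2) by the assumption that $M$ has no lens space summands; conclusion~(3) would make $(N[b], \gamma \setminus b)$ taut and in particular force $N[b]$ to be irreducible and $\boundary$-irreducible, contrary to assumption. Hence conclusion~(4) holds, giving
\[
-2\chi(Q) + |\boundary Q \cap \gamma| \geq 2|\boundary Q \cap b|.
\]
The main obstacle is the final step: converting this Euler-characteristic inequality into the essential annulus of conclusion~(2). The inequality sharply constrains how $\boundary Q$ can cross the sutures on $\boundary V$, and since $\boundary Q \cap \boundary V$ consists of parallel copies of $a$, an outermost-arc analysis on $Q$ should isolate a planar piece whose boundary arcs cap off, via a meridional band in $\nbhd(a) \subset \boundary V$, to an annulus with one boundary component on $\boundary M$ and the other a meridian of $V$ disjoint from $a$. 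Essentiality of this annulus would then follow from the tautness of $(N,\gamma)$ and the $\boundary$-irreducibility of $N$; making this extraction precise (and symmetrically handling the case where the witness comes from $N[b]$, yielding an annulus disjoint from $b$ instead of $a$) is the hardest part of the argument.
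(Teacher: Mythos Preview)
The paper does not give its own proof of Theorem~\ref{2h add}; it is quoted from \cite[Corollary~6.2]{T2} with only the remark that Taylor derived it there as a consequence of Theorem~\ref{SMT}. So there is no in-paper argument to compare against, beyond the confirmation that applying Theorem~\ref{SMT} to a suitable sutured structure on $N$ is indeed the intended route.

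Your sketch follows that route but has a real gap in the step where you dismiss conclusion~(3) of Theorem~\ref{SMT}. You assert that tautness of $(N[b],\gamma\setminus b)$ forces $N[b]$ to be $\boundary$-irreducible; this is false. A solid torus with two longitudinal sutures is taut (the $R_\pm$ annuli are incompressible and norm-minimizing) yet has compressible boundary. Tautness does give irreducibility of $N[b]$, so conclusion~(3) is excluded when $N[b]$ is reducible; but under your contrapositive hypothesis $N[b]$ might be irreducible and merely $\boundary$-reducible, and then tautness alone yields no contradiction. To handle that case you must use the further content of conclusion~(3)---that the cocore arc $\beta$ can be isotoped off the first decomposing surface $\wihat S$ of a taut hierarchy, with control over the homology class of $\wihat S$. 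Compare the proof of Theorem~\ref{graphthm}, where it is precisely this feature, not tautness by itself, that produces the contradiction.

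The second gap is the one you flag yourself: the extraction of the annulus from inequality~(4) is left as an ``outermost-arc analysis'' that ``should'' work. As written there is no argument that the inequality forces $Q$ to have any boundary on $\boundary M$, let alone that a piece of $Q$ caps off to an \emph{essential} annulus with a meridional end on $\boundary V$. Since this is exactly where conclusion~(2) of the theorem must emerge, the proposal is at present a plausible strategy outline rather than a proof.
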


The following theorem follows easily.

\begin{theorem}
Suppose that $V \subset S^3$ is a genus two handlebody having a Brunnian spine. Then (up to isotopy in $V$) any two Brunnian spines of $V$ are either isotopic in $V$ or one is a 2-bouquet and is obtained from a Brunnian handcuff graph, also a spine of $V$, by shrinking the separating edge or from a Brunnian $\theta$-graph, also a spine of $V$, by shrinking an edge. In particular, a genus 2 Brunnian handlebody knot has at most one trivalent Brunnian spine and if it has two Brunnian 2-bouquet spines, then it has four Brunnian spines, three of which are 2-bouquets and one of which is a $\theta$-curve.
\end{theorem}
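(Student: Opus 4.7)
The main tool is Theorem \ref{2h add}, applied to meridional discs coming from two different Brunnian spines. The plan is first to prove that the dual disc systems of any two Brunnian spines $\Gamma_1, \Gamma_2$ of $V$ can be made simultaneously disjoint, and then to run a case analysis based on spine types (trivalent versus 2-bouquet) that forces the two systems either to coincide or to combine into the cut system of a Brunnian $\theta$-curve spine of $V$.

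For the first step, let $\Gamma$ be a Brunnian spine of $V$ and let $a$ be the meridian of an edge $e$ of $\Gamma$. Removing $e$ from $\Gamma$ yields a trivial subgraph --- an unknot (from an edge of a $\theta$-curve, a loop of a handcuff, or a loop of a 2-bouquet, where the resulting lollipop or knot is trivial by the Brunnian property) or an unlink (from the separating edge of a handcuff) --- so $N[a] = X(V)[a]$ is either a solid torus or contains a reducing sphere, hence never both irreducible and $\boundary$-irreducible. By Lemma \ref{bd irred}, $N = X(V)$ is irreducible and $\boundary$-irreducible, and since $S^3$ has no boundary, conclusion (2) of Theorem \ref{2h add} is vacuous. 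It follows that any meridian of $\Gamma_1$ and any meridian of $\Gamma_2$ can be isotoped to be disjoint on $\boundary V$. Standard multicurve arguments upgrade this pairwise disjointness to simultaneous disjointness of $\boundary \mc{D}(\Gamma_1)$ and $\boundary \mc{D}(\Gamma_2)$, and an innermost-disc argument in $V$ then yields disjoint $\mc{D}(\Gamma_1)$ and $\mc{D}(\Gamma_2)$ inside $V$.

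The case analysis uses the fact that the dual discs of a trivalent Brunnian spine cut $\boundary V$ into two pairs of pants, while those of a 2-bouquet cut $\boundary V$ into a 4-holed sphere. If both $\Gamma_1, \Gamma_2$ are trivalent, every essential simple closed curve in a pair of pants is boundary-parallel, so the components of $\boundary \mc{D}(\Gamma_2)$ match those of $\boundary \mc{D}(\Gamma_1)$ bijectively up to isotopy. Separating must match separating, so the mixed $\theta$-curve/handcuff case is impossible, and same-type cases give $\Gamma_1 \cong \Gamma_2$; in particular $V$ has at most one trivalent Brunnian spine. If $\Gamma_1$ is trivalent and $\Gamma_2$ is a 2-bouquet, the same parallelism shows $\mc{D}(\Gamma_2) \subset \mc{D}(\Gamma_1)$ and exhibits $\Gamma_2$ as an edge-collapse of $\Gamma_1$; when $\Gamma_1$ is a handcuff, only the two non-separating discs together cut $V$ into a ball, so the shrunk edge must be the separating one.

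The delicate case is two 2-bouquet spines. Each component of $\boundary \mc{D}(\Gamma_2)$ lying in the 4-holed sphere $P = \boundary V \setminus \boundary \mc{D}(\Gamma_1)$ is either boundary-parallel in $P$ or essentially separates the four holes. There are three types of essentially-separating curves in $P$; any two distinct types intersect essentially, and two parallel essentially-separating curves would force the two discs of $\mc{D}(\Gamma_2)$ to be parallel in $V$, contradicting non-parallelism. Hence at least one component of $\boundary \mc{D}(\Gamma_2)$ is boundary-parallel in $P$, so $\mc{D}(\Gamma_1)$ and $\mc{D}(\Gamma_2)$ share a disc. The three distinct discs across the two systems are pairwise disjoint non-parallel non-separating meridional discs cutting $V$ into two balls, i.e., the cut system of a $\theta$-curve spine $\Gamma$ of $V$ with $\Gamma_1$ and $\Gamma_2$ as two of its three edge-collapses. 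Since the loops of the 2-bouquet obtained by collapsing edge $e_i$ of $\Gamma$ are isotopic as knots in $S^3$ to the constituent cycles of $\Gamma$ through $e_i$, Brunnian-ness of the two collapses $\Gamma_1, \Gamma_2$ forces all three constituent cycles of $\Gamma$ to be unknotted, so $\Gamma$ is a Brunnian $\theta$-curve and its third collapse is a third Brunnian 2-bouquet, giving four distinct Brunnian spines in total. The main obstacle is the surface-topology bookkeeping in the 4-holed sphere case and the Brunnian-ness verification for the candidate $\Gamma$.
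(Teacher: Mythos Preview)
Your proposal is correct and follows essentially the same approach as the paper's proof: both invoke Theorem~\ref{2h add} (with $M=S^3$, so conclusion (2) is vacuous) to make the two meridian disc systems disjoint, then run a case analysis on spine types using the pair-of-pants or four-holed-sphere structure of $\boundary V$ cut along one system, and in the two-2-bouquet case assemble the three distinct discs into the cut system of a $\theta$-curve whose constituent cycles are identified with the loops of the given 2-bouquets and hence unknotted. Your four-holed-sphere argument (classifying essential separating curves into three mutually intersecting types) is a mild rephrasing of the paper's ``cut into two pairs of pants'' step, and you should make explicit the subcase where both components of $\boundary\mc{D}(\Gamma_2)$ are boundary-parallel in $P$ (giving $\Gamma_1\cong\Gamma_2$) before passing to the three-disc configuration.
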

\begin{proof}
Let $\Gamma_1$ and $\Gamma_2$ be Brunnian spines of $W$ and set $V = \nbhd(\Gamma_1) = \nbhd(\Gamma_2)$ so that $N = X(\Gamma_1) = X(\Gamma_1)$. Without loss of generality, we may suppose that if one of them is $\theta$-curve or handcuff curve, then $\Gamma_2$ is. Let $a,b \subset \boundary V$ be meridians of edges of $\Gamma_1$ and $\Gamma_2$ respectively. Both of $N[a]$ and $N[b]$ are reducible or $\boundary$-reducible. By Lemma \ref{bd irred}, $N$ is irreducible and $\boundary$-irreducible. Thus, by Theorem \ref{2h add} with $M = S^3$, we see that $a$ and $b$ can be isotoped to be disjoint.

Let $\mu_i \subset X(\Gamma_i)$ consist of a single meridian for each edge of $\Gamma_i$. Since each curve of $\mu_1$ can be isotoped to be disjoint from each curve of $\mu_2$, we can isotope $\mu_1$ to be disjoint from $\mu_2$. Let $D_i$ be a set of meridian discs bounded by $\mu_i$. The isotopy can be extended into $V$, so that we have also isotoped $D_1$ to be disjoint from $D_2$. Move $\Gamma_1$ along with this isotopy. 

Suppose that $\Gamma_2$ is a $\theta$-curve or handcuff curve. Then $V \setminus \nbhd (D_2)$ is the union of two 3-balls $B$ and $B'$. The graph $\Gamma_2$ intersects each of $B$ and $B'$ in a tree with a single internal vertex and three edges. This graph is $\boundary$-parallel by the definition of spine. Since $\boundary V \setminus \nbhd(\boundary D_2)$ is the disjoint union of two pairs of pants, each curve of $\mu_1$ is isotopic to a curve of $\gamma_2$. Recall that the curves of $\mu_1$ are pairwise non-isotopic in $\boundary W$. Perform an isotopy of $V$ taking $\mu_1 \subset \mu_2$ and $D_1 \subset D_2$.  If $|\mu_1| = |\mu_2|$, then this isotopy may also be extended to an isotopy of $\Gamma_1 \cap B$ to $\Gamma_2 \cap B$ and $\Gamma_1 \cap B'$ to $\Gamma_1 \cap B$ by an isotopy fixing $D_1$ and $D_2$. Thus, if neither $\Gamma_1$ nor $\Gamma_2$ is a 2-bouquet the result holds. 
 
 Suppose that $\Gamma_2$ is again a $\theta$-curve or handcuff curve but $\Gamma_1$ is a 2-bouquet. By our previous argument, we have $D_1 \subset D_2$ with $|\mu_1| = 2$ and $|\mu_2| = 3$. Thus, $V \setminus \nbhd(D_1)$ is a 3-ball $B$, with two copies of the discs $D_1$ in its boundary. The graph $\Gamma_1 \cap B$ is a tree with four vertices and a single internal vertex that is $\boundary$-parallel. The graph $\Gamma_2 \cap B$ is a tree with six vertices two of which are internal. It is also $\boundary$-parallel. Thus, $\Gamma_1$ is isotopic to the result of shrinking the internal edge of $\Gamma_2 \cap B$ in $\Gamma_2$.
 
 Finally, suppose that both $\Gamma_1$ and $\Gamma_2$ are 2-bouquets. The argument is similar to what we have already done, except $V \setminus \nbhd(D_2)$ is a single 3-ball $B$ with two copies of $D_2$ in it boundary. The graph $\Gamma_2 \cap B$ is a tree with four edges and a single internal vertex which is $\boundary$-parallel in $B$. The curves $\mu_2$ lie in the surface $P = \boundary V \setminus \nbhd(\mu_2)$ which is a connected planar surface with four boundary components. If each curve of $\mu_1$ is isotopic to a curve of $\mu_2$, then, as before, we may isotope $D_1$ to $D_2$ and then $\Gamma_1$ to $\Gamma_2$ as before.
 
 Suppose, therefore, that $\mu_1$ is not isotopic to $\mu_2$. Let $\alpha$ be a component of $\mu_1$ not isotopic to a curve of $\mu_2$. Then $P \setminus \nbhd(\alpha)$ is the disjoint union of two pairs of pants. The components of $\mu_1$ are not isotopic to each other in $\boundary V$. Hence the curve $\alpha' = \mu_1 \setminus \alpha$ is isotopic to a curve of $\mu_2$. Perform the isotopy. Then $\mu_1$ and $\mu_2$ are each the union of two disjoint non-parallel non-separating curves in the genus 2 surface $\boundary V$. They share the component $\alpha'$. Thus, $\mu = \mu_1 \cup \mu_2$ cuts $\boundary V$ into the disjoint union of two pairs of pants, each of the three curves in $\mu$ is nonseparating, and each bounds a disc in $V$. Let $\Gamma$ be the $\theta$-curve in $V$ that is a spine for $V$ and is dual to those discs. The curves $\alpha$, $\alpha'$, and $\alpha'' = \mu_2 \setminus \alpha'$ are the meridians of the edges of $\Gamma$. Let $e$, $e'$, and $e''$ be the corresponding edges. Observe that $\Gamma_1$ can be obtained by shrinking $e''$ and $\Gamma_2$ can be obtained by shrinking $e$. The constituent knot of $\Gamma_1$ dual to $\alpha$ is obtained from $K' = e \cup e''$ by shrinking $e''$ to a point. Thus, in $S^3$, that knot and $K'$ are isotopic and so $K'$ is the unknot. Similarly, the constituent knot of $\Gamma_1$ dual to $\alpha'$ is obtained from $K = e' \cup e''$ by shrinking $e''$ to a point, so $K''$ is also the unknot. Likewise the constituent knots of $\Gamma_2$ are obtained from $K' = e \cup e''$ and $K'' = e \cup e'$ by shrinking $e$ to a point so they are both unknots. Consequently, the 3-constituent knots $K$, $K'$, and $K''$ of $T$ are all unknots so $\Gamma$ has the Brunnian property. Since $X(\Gamma) = X(\Gamma_1) = X(\Gamma_2)$ is $\boundary$-irreducible, $\Gamma$ is nontrivial, so it is Brunnian.
\end{proof}

\section*{Acknowledgements}
SAT thanks Alex Zupan and Ryan Blair for helpful conversations. This work was partially supported by NSF Grant DMS-2104022 and a Colby College Research Grant.

\begin{bibdiv}
    \begin{biblist}

    \bib{Adams1}{article}{
   author={Adams, Colin},
   author={Bonat, Alexandra},
   author={Chande, Maya},
   author={Chen, Joye},
   author={Jiang, Maxwell},
   author={Romrell, Zachary},
   author={Santiago, Daniel},
   author={Shapiro, Benjamin},
   author={Woodruff, Dora},
   title={Hyperbolic knotoids},
   journal={Eur. J. Math.},
   volume={10},
   date={2024},
   number={3},
   pages={Paper No. 43, 30},
   issn={2199-675X},
   review={\MR{4773250}},
   doi={10.1007/s40879-024-00755-z},
}

\bib{Adams2}{article}{
   author={Adams, Colin},
   author={Romrell, Zachary},
   author={Bonat, Alexandra},
   author={Chande, Maya},
   author={Chen, Joye},
   author={Jiang, Maxwell},
   author={Santiago, Daniel},
   author={Shapiro, Benjamin},
   author={Woodruff, Dora},
   title={Generalised knotoids},
   journal={Math. Proc. Cambridge Philos. Soc.},
   volume={177},
   date={2024},
   number={1},
   pages={67--102},
   issn={0305-0041},
   review={\MR{4829311}},
   doi={10.1017/S0305004124000148},
}

\bib{ChoMcCullough}{article}{
   author={Cho, Sangbum},
   author={McCullough, Darryl},
   title={The tree of knot tunnels},
   journal={Geom. Topol.},
   volume={13},
   date={2009},
   number={2},
   pages={769--815},
   issn={1465-3060},
   review={\MR{2469530}},
   doi={10.2140/gt.2009.13.769},
}

\bib{ProteinTheta}{article}{
 author={Dabrowski-Tumanski P},
 author={Goundaroulis D},
 author = {Stasiak A},
 author={Rawdon EJ},
 author={Sulkowska JI},
 title={Theta-curves in proteins},
 journal={Protein Science},
 date={2024},
 volume={33},
 issue={9},
 doi={doi.org/10.1002/pro.5133}
}

\bib{G1}{article}{
   author={Gabai, David},
   title={Foliations and the topology of $3$-manifolds},
   journal={J. Differential Geom.},
   volume={18},
   date={1983},
   number={3},
   pages={445--503},
   issn={0022-040X},
   review={\MR{0723813}},
}

\bib{G2}{article}{
   author={Gabai, David},
   title={Foliations and the topology of $3$-manifolds. II},
   journal={J. Differential Geom.},
   volume={26},
   date={1987},
   number={3},
   pages={461--478},
   issn={0022-040X},
   review={\MR{0910017}},
}

\bib{G3}{article}{
   author={Gabai, David},
   title={Foliations and the topology of $3$-manifolds. III},
   journal={J. Differential Geom.},
   volume={26},
   date={1987},
   number={3},
   pages={479--536},
   issn={0022-040X},
   review={\MR{0910018}},
}

\bib{GilleRobert}{article}{
author={Catherine Gille},
author={Louis-Hadrien Robert},
title={A signature invariant for knotted Klein graphs},
journal={Algebr. Geom. Topol.},
volume={18},
issue={6},
page={3719–3747},
date={2018}
}

\bib{GoLi}{article}{
   author={Gordon, C. McA.},
   author={Litherland, R. A.},
   title={Incompressible planar surfaces in $3$-manifolds},
   journal={Topology Appl.},
   volume={18},
   date={1984},
   number={2-3},
   pages={121--144},
   issn={0166-8641},
   review={\MR{0769286}},
   doi={10.1016/0166-8641(84)90005-1},
}

\bib{GL}{article}{
   author={Gordon, C. McA.},
   author={Luecke, J.},
   title={Knots are determined by their complements},
   journal={J. Amer. Math. Soc.},
   volume={2},
   date={1989},
   number={2},
   pages={371--415},
   issn={0894-0347},
   review={\MR{0965210}},
   doi={10.2307/1990979},
}

\bib{HHMP}{article}{
   author={Heard, Damian},
   author={Hodgson, Craig},
   author={Martelli, Bruno},
   author={Petronio, Carlo},
   title={Hyperbolic graphs of small complexity},
   journal={Experiment. Math.},
   volume={19},
   date={2010},
   number={2},
   pages={211--236},
   issn={1058-6458},
   review={\MR{2676749}},
}

\bib{Involve}{article}{
   author={Jang, Byoungwook},
   author={Kronaeur, Anna},
   author={Luitel, Pratap},
   author={Medici, Daniel},
   author={Taylor, Scott A.},
   author={Zupan, Alexander},
   title={New examples of Brunnian theta graphs},
   journal={Involve},
   volume={9},
   date={2016},
   number={5},
   pages={857--875},
   issn={1944-4176},
   review={\MR{3541985}},
   doi={10.2140/involve.2016.9.857},
}

\bib{Kinoshita}{article}{
   author={Kinoshita, Shin'ichi},
   title={Alexander polynomials as isotopy invariants. I},
   journal={Osaka Math. J.},
   volume={10},
   date={1958},
   pages={263--271},
   issn={0388-0699},
   review={\MR{0102819}},
}

\bib{KO}{article}{
   author={Koda, Yuya},
   author={Ozawa, Makoto},
   title={Essential surfaces of non-negative Euler characteristic in genus
   two handlebody exteriors},
   note={With an appendix by Cameron Gordon},
   journal={Trans. Amer. Math. Soc.},
   volume={367},
   date={2015},
   number={4},
   pages={2875--2904},
   issn={0002-9947},
   review={\MR{3301885}},
   doi={10.1090/S0002-9947-2014-06199-0},
}

\bib{KOW}{article}{
   author={Koda, Yuya},
   author={Ozawa, Makoto},
   author={Wang, Yi-sheng},
   title={Essential annuli in genus two handlebody exteriors},
   journal={ArXiV/2404.04503}
}

\bib{KMS}{article}{
   author={Kouno, Masaharu},
   author={Motegi, Kimihiko},
   author={Shibuya, Tetsuo},
   title={Twisting and knot types},
   journal={J. Math. Soc. Japan},
   volume={44},
   date={1992},
   number={2},
   pages={199--216},
   issn={0025-5645},
   review={\MR{1154840}},
   doi={10.2969/jmsj/04420199},
}

\bib{MangumStanford}{article}{
   author={Mangum, Brian},
   author={Stanford, Theodore},
   title={Brunnian links are determined by their complements},
   journal={Algebr. Geom. Topol.},
   volume={1},
   date={2001},
   pages={143--152},
   issn={1472-2747},
   review={\MR{1823496}},
   doi={10.2140/agt.2001.1.143},
}

\bib{McAteeSilverWilliams}{article}{
   author={McAtee, Jenelle},
   author={Silver, Daniel S.},
   author={Williams, Susan G.},
   title={Coloring spatial graphs},
   journal={J. Knot Theory Ramifications},
   volume={10},
   date={2001},
   number={1},
   pages={109--120},
   issn={0218-2165},
   review={\MR{1822144}},
   doi={10.1142/S0218216501000755},
}

\bib{MT}{article}{
   author={Matveev, Sergei},
   author={Turaev, Vladimir},
   title={A semigroup of theta-curves in 3-manifolds},
   language={English, with English and Russian summaries},
   journal={Mosc. Math. J.},
   volume={11},
   date={2011},
   number={4},
   pages={805--814, 822},
   issn={1609-3321},
   review={\MR{2918296}},
   doi={10.17323/1609-4514-2011-11-4-805-814},
}

\bib{DNATheta}{article}{
author={O'Donnol, Danielle},
author={Stasiak, Andrzej},
author={Dorothy, Buck},
title={Two convergent pathways of DNA knotting in replicating DNA molecules as revealed by $\theta$-curve analysis},
journal={Nucleic Acids Research}, 
volume={46}, 
issue={17}, 
date={28 September 2018}, 
page={9181–9188},
doi={https://doi.org/10.1093/nar/gky559}
}

\bib{OT}{article}{
   author={Ozawa, Makoto},
   author={Tsutsumi, Yukihiro},
   title={Minimally knotted spatial graphs are totally knotted},
   journal={Tokyo J. Math.},
   volume={26},
   date={2003},
   number={2},
   pages={413--421},
   issn={0387-3870},
   review={\MR{2020794}},
   doi={10.3836/tjm/1244208599},
}

\bib{Sakuma}{article}{
   author={Sakuma, Makoto},
   title={On strongly invertible knots},
   conference={
      title={Algebraic and topological theories},
      address={Kinosaki},
      date={1984},
   },
   book={
      publisher={Kinokuniya, Tokyo},
   },
   isbn={4-87573-098-5},
   date={1986},
   pages={176--196},
   review={\MR{1102258}},
}

    \bib{Scharlemann-bandsum}{article}{
   author={Scharlemann, Martin},
   title={Smooth spheres in ${\bf R}^4$ with four critical points are
   standard},
   journal={Invent. Math.},
   volume={79},
   date={1985},
   number={1},
   pages={125--141},
   issn={0020-9910},
   review={\MR{0774532}},
   doi={10.1007/BF01388659},
}

\bib{Scharlemann1}{article}{
   author={Scharlemann, Martin},
   title={Sutured manifolds and generalized Thurston norms},
   journal={J. Differential Geom.},
   volume={29},
   date={1989},
   number={3},
   pages={557--614},
   issn={0022-040X},
   review={\MR{0992331}},
}

\bib{Scharlemann2}{article}{
   author={Scharlemann, Martin},
   title={Producing reducible $3$-manifolds by surgery on a knot},
   journal={Topology},
   volume={29},
   date={1990},
   number={4},
   pages={481--500},
   issn={0040-9383},
   review={\MR{1071370}},
   doi={10.1016/0040-9383(90)90017-E},
}

\bib{Schubert}{article}{
   author={Schubert, Horst},
   title={\"Uber eine numerische Knoteninvariante},
   language={German},
   journal={Math. Z.},
   volume={61},
   date={1954},
   pages={245--288},
   issn={0025-5874},
   review={\MR{0072483}},
   doi={10.1007/BF01181346},
}

\bib{Schultens}{article}{
   author={Schultens, Jennifer},
   title={Additivity of bridge numbers of knots},
   journal={Math. Proc. Cambridge Philos. Soc.},
   volume={135},
   date={2003},
   number={3},
   pages={539--544},
   issn={0305-0041},
   review={\MR{2018265}},
   doi={10.1017/S0305004103006832},
}

\bib{Taylor-review}{article}{
    author={Taylor, Scott A.},
    title={Abstractly planar spatial graphs},

book={
   title={Encyclopedia of knot theory},
   editor={Adams, Colin},
   editor={Flapan, Erica},
   editor={Henrich, Allison},
   editor={Kauffman, Louis H.},
   editor={Ludwig, Lewis D.},
   editor={Nelson, Sam},
   note={Edited by Colin Adams, Erica Flapan, Allison Henrich, Louis H.
   Kauffman, Lewis D. Ludwig and Sam Nelson},
   publisher={CRC Press, Boca Raton, FL},
   date={[2021] \copyright 2021},
   pages={xi+941},
   isbn={978-1-138-29784-5},
   isbn={978-1-138-29821-7},
   review={\MR{4439733}}
   }
}
      \bib{T1}{article}{
   author={Taylor, Scott A.},
   title={Band-taut sutured manifolds},
   journal={Algebr. Geom. Topol.},
   volume={14},
   date={2014},
   number={1},
   pages={157--215},
   issn={1472-2747},
   review={\MR{3158757}},
   doi={10.2140/agt.2014.14.157},
}

\bib{T2}{article}{
   author={Taylor, Scott A.},
   title={Comparing 2-handle additions to a genus 2 boundary component},
   journal={Trans. Amer. Math. Soc.},
   volume={366},
   date={2014},
   number={7},
   pages={3747--3769},
   issn={0002-9947},
   review={\MR{3192616}},
   doi={10.1090/S0002-9947-2014-06253-3},
}

\bib{T3}{article}{
   author={Taylor, Scott A.},
   title={Closures of 1-tangles and annulus twists},
   eprint={arXiv},
   volume={},
   date={2025},
   number={},
   pages={},
   issn={},
}

\bib{TT-g2}{article}{
   author={Taylor, Scott A.},
   author={Tomova, Maggy},
   title={Tunnel number and bridge number of composite genus 2 spatial
   graphs},
   journal={Pacific J. Math.},
   volume={314},
   date={2021},
   number={2},
   pages={451--494},
   issn={0030-8730},
   review={\MR{4337471}},
   doi={10.2140/pjm.2021.314.451},
}

\bib{TT-Satellite}{article}{
   author={Taylor, Scott A.},
   author={Tomova, Maggy},
   title={The genus 1 bridge number of satellite knots},
   journal={J. Lond. Math. Soc. (2)},
   volume={112},
   date={2025},
   number={2},
   pages={Paper No. e70260, 80},
   issn={0024-6107},
   review={\MR{4946426}},
   doi={10.1112/jlms.70260},
}

\bib{Thurston}{article}{
   author={Thurston, William P.},
   title={A norm for the homology of $3$-manifolds},
   journal={Mem. Amer. Math. Soc.},
   volume={59},
   date={1986},
   number={339},
   pages={i--vi and 99--130},
   issn={0065-9266},
   review={\MR{0823443}},
}

\bib{Thurston-book}{book}{
   author={Thurston, William P.},
   title={Three-dimensional geometry and topology. Vol. 1},
   series={Princeton Mathematical Series},
   volume={35},
   editor={Levy, Silvio},
   note={Edited by Silvio Levy},
   publisher={Princeton University Press, Princeton, NJ},
   date={1997},
   pages={x+311},
   isbn={0-691-08304-5},
   review={\MR{1435975}},
}

\bib{Turaev}{article}{
   author={Turaev, Vladimir},
   title={Knotoids},
   journal={Osaka J. Math.},
   volume={49},
   date={2012},
   number={1},
   pages={195--223},
   issn={0030-6126},
   review={\MR{2903260}},
}

\bib{Waldhausen}{article}{
   author={Waldhausen, Friedhelm},
   title={Heegaard-Zerlegungen der $3$-Sph\"are},
   language={German},
   journal={Topology},
   volume={7},
   date={1968},
   pages={195--203},
   issn={0040-9383},
   review={\MR{0227992}},
   doi={10.1016/0040-9383(68)90027-X},
}

\bib{Wang}{article}{
   author={Wang, Yi-Sheng},
   title={Annulus configurations in handlebody-knot exteriors},
   journal={Adv. Geom.},
   volume={24},
   date={2024},
   number={3},
   pages={341--355},
   issn={1615-715X},
   review={\MR{4781466}},
   doi={10.1515/advgeom-2024-0010},
}

\bib{Wolcott}{article}{
   author={Wolcott, Keith},
   title={The knotting of theta curves and other graphs in $S^3$},
   conference={
      title={Geometry and topology},
      address={Athens, Ga.},
      date={1985},
   },
   book={
      series={Lecture Notes in Pure and Appl. Math.},
      volume={105},
      publisher={Dekker, New York},
   },
   isbn={0-8247-7621-6},
   date={1987},
   pages={325--346},
   review={\MR{0873302}},
}

    \end{biblist}
\end{bibdiv}

\end{document}